\numberwithin{equation}{section}
\newtheorem{thm}{Theorem}[section]
\newtheorem{lma}[thm]{Lemma}
\newtheorem{cor}[thm]{Corollary}
\newtheorem{prop}[thm]{Proposition}
\theoremstyle{definition}
\newtheorem{defn}[thm]{Definition}
\renewcommand{\geq}{\geqslant}
\renewcommand{\leq}{\leqslant}
\renewcommand{\H}{\text{H}}
\renewcommand{\P}{\text{P}}
\date{}
\title{Dimension and measure for generic continuous images}
\author{R. Balka$^{1}$, \'A. Farkas$^{2}$, J. M. Fraser$^{2}$ and J. T. Hyde$^{2}$}
\author{R. Balka\\
\emph{Alfr\'ed R\'enyi Institute of Mathematics, Hungarian Academy of Sciences,}\\
\emph{PO Box 127, H-1364 Budapest, Hungary}
\\ \\
\'A. Farkas\\
\emph{Mathematical Institute, University of St Andrews, North Haugh,}\\
\emph{St Andrews, Fife, KY16 9SS, Scotland}
\\ \\
J. M. Fraser\\
\emph{Mathematical Institute, University of St Andrews, North Haugh,}\\
\emph{St Andrews, Fife, KY16 9SS, Scotland}
\\ \\
J. T. Hyde\\
\emph{Mathematical Institute, University of St Andrews, North Haugh,}\\
\emph{St Andrews, Fife, KY16 9SS, Scotland}}
\begin{document}
\maketitle

\begin{center}
$^1$Alfr\'ed R\'enyi Institute of Mathematics, Hungarian Academy of Sciences, PO Box 127, H-1364 Budapest, Hungary\\ \vspace{3mm}
$^2$Mathematical Institute, University of St Andrews, North Haugh, St Andrews, Fife, KY16 9SS, Scotland
\end{center}
\vspace{0mm}
\begin{abstract}
We consider the Banach space consisting of continuous functions from an arbitrary uncountable compact metric space, $X$, into $\mathbb{R}^n$.  The key question is `\emph{what is the generic dimension of} $f(X)$?' and we consider two different approaches to answering it:  Baire category and prevalence.  In the Baire category setting we prove that typically the packing and upper box dimensions are as large as possible, $n$, but find that the behaviour of the Hausdorff, lower box and topological dimensions is considerably more subtle.  In fact, they are typically equal to the minimum of $n$ and the \emph{topological} dimension of $X$.  We also study the typical Hausdorff and packing measures of $f(X)$ and, in particular, give necessary and sufficient conditions for them to be zero, positive and finite, or infinite.

It is interesting to compare the Baire category results with results in the prevalence setting.  As such we also discuss a result of Dougherty on the prevalent topological dimension of $f(X)$ and give some simple applications concerning the prevalent dimensions of graphs of real-valued continuous functions on compact metric spaces, allowing us to extend a recent result of Bayart and Heurteaux.
\\ \\
\emph{Mathematics Subject Classification} 2010:  Primary: 28A80, 28A78, 54E52, 54C05.
\\ \\
\emph{Key words and phrases}:  Hausdorff dimension, packing dimension, topological dimension, Baire category, prevalence, continuous functions.
\end{abstract}

\section{Introduction}

Let $X$ be an uncountable compact metric space, $n$ be a positive integer and let $C_n(X)$ denote the set of continuous functions from $X$ to $\mathbb{R}^n$, which is a Banach space over $\mathbb{R}$ when equipped with the supremum norm, $\| \cdot \|_\infty$.  We investigate the dimensions of the image of $X$ under mappings from $C_n(X)$.  Rather than compute these dimensions for specific examples, we look to find the `generic answer', with our key question being:
\[
\emph{What is the dimension of $f(X)$ for a generic $f \in C_n(X)$?}
\]
In order to do this we need a suitable notion of genericity in Banach spaces, which we obtain using the theories of \emph{Baire category} and \emph{prevalence}, see Subsection \ref{prelims} for an account of the theory.
\\ \\
Our main results concern the Baire category setting.  We prove that typically the packing and upper box dimensions are as large as possible, $n$, but find that the behaviour of the Hausdorff, lower box and topological dimensions is considerably more subtle.  In fact they are typically equal to the minimum of $n$ and the \emph{topological} dimension of $X$.  Interestingly, this means that the typical Hausdorff dimension is usually not as small as possible, although it is always an integer.  This is in stark contrast to many other results concerning Baire category and Hausdorff dimension, where one normally sees that the typical Hausdorff dimension is as small as possible, see for example \cite{graphsums, me_typrandom}.  During our investigation, we are able to generalise some results of Kato \cite{kato} concerning the typical topological dimension of $f(X)$.
\\ \\
We also investigate the typical Hausdorff and packing measures of $f(X)$ in the appropriate dimensions.  We find an interesting dichotomy. If the topological dimension of $X$ is greater than or equal to $n$, then the typical $n$-dimensional packing and Hausdorff measures are positive and finite.  However, if the topological dimension of $X$ is some number $t$ strictly less than $n$, then the typical Hausdorff dimension of $f(X)$ is $t$ but the typical $t$-dimensional Hausdorff measure is infinity and the typical packing dimension of $f(X)$ is $n$ but the typical $n$-dimensional packing measure is zero.  A similar dichotomy was observed by Fraser when studying typical random self-similar sets \cite[Theorem 2.5]{me_typrandom}.
\\ \\
It is natural and interesting to examine the same questions in the setting of prevalence.  In the following subsection we observe that these are answered by a result of Dougherty \cite{doug} and in particular for a \emph{prevalent} set of functions in $C_n(X)$, the topological, Hausdorff, packing and box dimensions of $f(X)$ are as large as possible, namely $n$, and we discuss some simple applications of this fact.  In particular, we obtain results on the prevalent dimensions of graphs of real-valued continuous functions on compact metric spaces which allow us to extend a recent result of Bayart and Heurteaux \cite{BH}.  Their result is stated below as Theorem \ref{BH1} and we provide a strengthening of this, Theorem \ref{fullprev}.

\subsection{Genericity and dimension} \label{prelims}

In this subsection we introduce some preliminary concepts and notation which will be required to state our results.  In particular, we discuss Baire category, prevalence and the different notions of dimension we will be concerned with.  To put our results in context, we also discuss some previous work related to our results and, in particular, we apply a result of Dougherty in the prevalence setting.  The main results of this paper concern the Baire category setting and will be stated in Section \ref{results}.
\\ \\
Baire category provides an important way of describing the \emph{generic} behavior of elements in a Banach space.  We will recall the basic definitions and theorems.  For more details, see \cite{oxtoby}.
\begin{defn}
Let $(X,d)$ be a complete metric space (for our purposes $X$ will be a Banach space).  A set $M\subseteq X$ is said to be \emph{of the first category}, or, \emph{meagre}, if it can be written as a countable union of nowhere dense sets and a set $T \subseteq X$ is \emph{residual}, or, \emph{co-meagre}, if $X \setminus T$ is meagre.  Finally, a property is called \emph{typical} if the set of points which have the property is residual.
\end{defn}
In Subsections \ref{keytypproofs}--\ref{typproofs3} we will use the following theorem to test for typicality without mentioning it explicitly.
\begin{thm}[Baire Category Theorem] \label{BCThm}
In a complete metric space, $X$, a set $T \subseteq X$ is residual if and only if $T$ contains a countable intersection of open dense sets or, equivalently, $T$ contains a dense $\mathcal{G}_\delta$ subset of $X$.
\end{thm}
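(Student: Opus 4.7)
The plan is to split the biconditional into the standard two directions and to isolate the only nontrivial use of completeness into a single lemma: in a complete metric space, any countable intersection of open dense sets is itself dense. That lemma supplies both the equivalence of the two phrasings (a countable intersection of open dense sets is automatically a dense $\mathcal{G}_\delta$) and the nonemptiness needed for the ``residual $\Rightarrow$ contains a dense $\mathcal{G}_\delta$'' implication.

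For the lemma itself I would argue by the classical nested-ball construction. Given open dense sets $U_1, U_2, \dots$ and an arbitrary nonempty open set $V$, I pick a closed ball $\overline{B}(x_1, r_1) \subseteq V \cap U_1$ with $r_1 < 1$, then inductively choose $\overline{B}(x_{k+1}, r_{k+1}) \subseteq B(x_k, r_k) \cap U_{k+1}$ with $r_{k+1} < 1/(k+1)$; this is possible because each $U_{k+1}$ is open and dense, so $U_{k+1}$ meets the nonempty open set $B(x_k, r_k)$. The centres form a Cauchy sequence, which converges by completeness of $X$ to a point lying in every $\overline{B}(x_k, r_k)$, hence in $V \cap \bigcap_k U_k$. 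Since $V$ was arbitrary, $\bigcap_k U_k$ is dense.

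To finish the theorem, for the ``$\Leftarrow$'' direction I assume $T \supseteq \bigcap_k U_k$ with each $U_k$ open and dense. Then $X \setminus T \subseteq \bigcup_k (X \setminus U_k)$, and each $X \setminus U_k$ is closed with empty interior, hence nowhere dense, so $X \setminus T$ is meagre and $T$ is residual; no completeness is used here. For the ``$\Rightarrow$'' direction I assume $T$ is residual, write $X \setminus T \subseteq \bigcup_k N_k$ with each $N_k$ nowhere dense, and set $U_k := X \setminus \overline{N_k}$. Each $U_k$ is open and dense, so by the lemma $G := \bigcap_k U_k$ is a dense $\mathcal{G}_\delta$, and $G \subseteq T$. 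The equivalence of the two phrasings of the right-hand side is immediate: a dense $\mathcal{G}_\delta$ is itself a countable intersection of open dense sets, and conversely the lemma upgrades any countable intersection of open dense sets to a dense $\mathcal{G}_\delta$.

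The only real obstacle is the nested-ball construction in the lemma; everything else is formal manipulation of complements and the definitions of meagre and nowhere dense.
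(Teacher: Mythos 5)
Your proof is correct: the nested-ball lemma is stated and applied properly (the limit point lies in every $\overline{B}(x_k,r_k)$ and hence in $V\cap\bigcap_k U_k$), and the two directions of the biconditional plus the equivalence of the two phrasings are handled without gaps. The paper itself gives no proof of this theorem, deferring entirely to Oxtoby, and your argument is exactly the standard one found there, so there is nothing to contrast.
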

For a proof of Theorem \ref{BCThm}, see \cite[Theorem 9.2]{oxtoby}.  Prevalence provides another important way of describing the \emph{generic} behavior of elements in a Banach space.  In finite dimensional vector spaces Lebesgue measure provides a natural tool for deciding if a property is `generic'.  Namely, if the set of elements which do not have some property is a Lebesgue null set, then it is said that this property is `generic' from a measure theoretical point of view.  However, when the space in question is infinite dimensional this approach breaks down because there is no useful analogue to Lebesgue measure in the infinite dimensional setting.  The theory of prevalence has been developed to solve this problem.  It was formulated by Hunt, Sauer and Yorke in 1992 \cite{prevalence1} in the context of completely metrizable topological vector spaces, see also \cite{prevalence}. For the purposes of this paper we will only set up the theory for Banach spaces.  We note that Christensen introduced a similar theory in the 1970s \cite{christ, christ2} in the setting of abelian Polish groups.
\begin{defn} \label{prevalentdef}
Let $X$ be a Banach space.  A Borel set $S \subseteq X$ is called \emph{shy} if there exists a compactly supported Borel probability measure $\mu$ on $X$ such that $\mu\left(S+x\right) = 0$ for all $x \in X$.  A non-Borel set $S \subseteq X$ is \emph{shy} if it is contained in a shy Borel set and the complement of a shy set is called a \emph{prevalent} set.
\end{defn}
Both prevalence and typicality are reasonable notions of genericity and satisfy many of the natural properties one would expect from such a notion.  Perhaps most importantly they are both stable under taking countable intersections.  Interestingly, however, they often give starkly different answers to genericity questions and as such their interaction and differences have attracted a lot of attention in the literature in recent years.  When using these notions to search for the generic value of a limiting procedure (as in our situation), roughly speaking, one expects typicality to favour divergence and prevalence to favour convergence.  A fascinating example of this behaviour, and one which provides a poignant illustration of the differences in the two theories, is provided by normal numbers.  In particular, the set of normal numbers in the unit interval is prevalent (it is of full Lebesgue measure; a simple consequence of the Ergodic Theorem/Strong Law of Large Numbers), but it is also meagre, see \cite{prevalence, salat}.  So, \emph{prevalently} frequencies of digits in decimal expansions converge and \emph{typically} they diverge.
\\ \\
In this paper we will be concerned with five different notions of dimension, namely, the topological, Hausdorff, packing, and upper and lower box dimensions, which we will denote by $\dim_\text{T}$, $\dim_\text{H}$, $\dim_\text{P}$, $\overline{\dim}_\text{B}$ and $\underline{\dim}_\text{B}$ respectively, as well as the Hausdorff and packing measure, which we will denote by $\mathcal{H}^s$ and $\mathcal{P}^s$ respectively for $s \geq 0$.  Rather than define each of these individually, we refer the reader to \cite{falconer, hurwal, mattila} for definitions and basic properties.  The following proposition gives the relationships between these dimensions, which will be used throughout the paper without being mentioned explicitly.  For the reader's convenience, any other basic properties of these dimensions will be introduced when required during the various proofs.

\begin{prop} \label{relationships}
For a non-empty bounded subset $F$ of a metric space $X$ we have the following relationships between the dimensions discussed above:
\[
\begin{array}{ccccccc}
 & &                                                            &&                  \dim_\text{\emph{P}} F                   & &   \\
&          &&                       \rotatebox[origin=c]{45}{$\leq$}          & &              \rotatebox[origin=c]{315}{$\leq$} &   \\
 \dim_\text{\emph{T}} F & \leq & \dim_\text{\emph{H}} F                                            & &        &&                  \overline{\dim}_\text{\emph{B}} F  \\
 &                 &&                \rotatebox[origin=c]{315}{$\leq$}              &&           \rotatebox[origin=c]{45}{$\leq$} &  \\
 & &                           &&                                          \underline{\dim}_\text{\emph{B}} F                         & &
\end{array}
\]
and, moreover, unlike the other dimensions, $\dim_\text{\emph{T}} F$ is always a non-negative integer or $+\infty$.  We also have $\mathcal{H}^s(F) \leq \mathcal{P}^s(F)$ for all $s \geq 0$.
\end{prop}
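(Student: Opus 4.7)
The plan is to dispatch each arrow of the diagram (and the measure inequality) by invoking standard references, since Proposition \ref{relationships} is really a compilation of well-known facts. All the purely fractal-geometric relations can be found in \cite[Chapters 2--3]{falconer} or \cite[Chapters 4--5]{mattila}, whereas the comparison between topological and Hausdorff dimension requires the classical theorem of Szpilrajn--Marczewski, for which I would cite Hurewicz--Wallman \cite{hurwal}.

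First I would handle the easy relations. The inequality $\underline{\dim}_\text{B} F \leq \overline{\dim}_\text{B} F$ is immediate from $\liminf \leq \limsup$ applied to the covering-number ratio $\log N_\delta(F)/\log(1/\delta)$. For $\dim_\text{H} F \leq \underline{\dim}_\text{B} F$, any minimal $\delta$-cover gives $\mathcal{H}^s_\delta(F) \leq N_\delta(F)\delta^s$, so $\mathcal{H}^s(F) = 0$ whenever $s > \underline{\dim}_\text{B} F$. The measure inequality $\mathcal{H}^s(F) \leq \mathcal{P}^s(F)$ follows from the standard comparison between covers and packings at matching scales, and it immediately yields $\dim_\text{H} F \leq \dim_\text{P} F$. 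Finally, $\dim_\text{P} F \leq \overline{\dim}_\text{B} F$ follows from the fact that packing dimension is the countably-stable regularisation of $\overline{\dim}_\text{B}$.

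Second, the substantive step is $\dim_\text{T} F \leq \dim_\text{H} F$. This is the theorem of Szpilrajn--Marczewski, proved in Hurewicz--Wallman \cite[Chapter VII]{hurwal}. The underlying idea is that if $\mathcal{H}^s(F) < \infty$ for some $s$, then for every $\varepsilon > 0$ one can cover $F$ by closed sets of diameter less than $\varepsilon$ whose boundaries intersect $F$ in a set of vanishing $(s-1)$-dimensional Hausdorff measure; iterating produces a finite cover whose nerve has combinatorial dimension at most $\lfloor s \rfloor$, and hence $\dim_\text{T} F \leq \lfloor s \rfloor$ for every $s > \dim_\text{H} F$. The claim that $\dim_\text{T} F$ is a non-negative integer or $+\infty$ is built into the inductive (equivalently, covering) definition of topological dimension, so no further argument is needed.

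The main obstacle is of course the Szpilrajn--Marczewski theorem, whose proof is genuinely non-trivial; however, since the proposition is stated only as a reference tool for the reader, I would simply cite \cite{hurwal} rather than reproduce the argument. All remaining inequalities are direct consequences of the definitions and standard measure comparisons recalled in \cite{falconer, mattila}.
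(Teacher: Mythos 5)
Your proposal is correct and matches the paper's treatment: the paper offers no proof of Proposition \ref{relationships} at all, explicitly deferring to \cite{falconer, hurwal, mattila} for definitions and basic properties, and your decomposition into elementary covering/packing comparisons plus the Szpilrajn--Marczewski theorem (Hurewicz--Wallman, Chapter VII) for $\dim_\text{T} F \leq \dim_\text{H} F$ is exactly the intended justification. No gaps.
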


Prevalence and Baire category have been used extensively in the literature to study dimensional properties of generic continuous functions.  In particular, there has been considerable interest in studying the generic dimension of images of continuous functions.  This problem is related to the seminal results of Kaufman \cite{kaufman}, Mattila \cite{mattilaproj} and Marstrand \cite{marstrand} on the almost sure dimension of orthogonal projections of sets in Euclidean spaces.  For example, Marstrand's projection theorem states that if $F \subseteq \mathbb{R}^2$ is Borel, then for almost all linear subspaces of the plane, the Hausdorff dimension of the corresponding orthogonal projection is equal to $\min\{1, \dim_\H F\}$, i.e., the dimension is generically preserved.  Recently, Orponen \cite{orpproj} has obtained interesting results on generic projections in the Baire category setting.  In \cite{prevalentimages}, prevalence was used to extend the results on projections to the space of all continuously differentiable maps.  Again it was found that the dimension is generically preserved, this time using prevalence to give a notion of `generic'.  It is natural to ask the same question in the much larger space of just continuous functions.  This question can be answered in the prevalence setting by the following result of Dougherty, see \cite[Theorem 11]{doug}.
\begin{thm}[Dougherty] \label{doug1}
If $K$ is homeomorphic to the Cantor space, then
\[
\{ f \in C_n(K) : \text{\emph{int}}\left(f(K) \right) \neq \emptyset \}
\]
is a prevalent subset of $C_n(K)$.
\end{thm}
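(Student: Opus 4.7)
To prove that $S := \{f \in C_n(K) : \mathrm{int}(f(K)) = \emptyset\}$ is shy (hence its complement prevalent), I would exhibit a compactly supported Borel probability measure $\mu$ on $C_n(K)$ with $\mu(S+g) = 0$ for every $g \in C_n(K)$. As a preliminary, $S$ is Borel, indeed $G_\delta$: the assignment $f \mapsto f(K)$ is continuous into the hyperspace of compact subsets of $\mathbb{R}^n$ under the Hausdorff metric, so each set $\{f : f(K) \supseteq \overline{B}(y,r)\}$ is closed. Taking a countable union over rational $y$ and $r>0$ realises the complement of $S$ as an $F_\sigma$.

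The probe measure is built from the Cantor structure of $K$. By the Alexandroff--Hausdorff theorem there exists a continuous surjection $\phi : K \to [0,1]^n$. Consider the $n^2$-dimensional subspace $V = \{A\phi : A \in M_n(\mathbb{R})\} \subseteq C_n(K)$ (injectivity of $A \mapsto A\phi$ follows from $\phi(K) = [0,1]^n$), and let $\mu$ be the pushforward of the normalised Lebesgue measure on the operator-norm unit ball of $M_n(\mathbb{R})$ under $A \mapsto A\phi$. Then $\mu$ is a compactly supported Borel probability measure on $C_n(K)$, and, using that $g$ and $-g$ range over the same set as $g$ varies, establishing $\mu(S+g) = 0$ for every $g$ amounts to proving that for each $g \in C_n(K)$ the set
\[
E_g := \{A \in M_n(\mathbb{R}) : \mathrm{int}((A\phi + g)(K)) = \emptyset\}
\]
is Lebesgue null.

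Writing $L := \{(\phi(x), g(x)) : x \in K\} \subseteq \mathbb{R}^{2n}$, a compact set with $\pi_1(L) = \phi(K) = [0,1]^n$, and $T_A : \mathbb{R}^{2n} \to \mathbb{R}^n$, $(y,z) \mapsto Ay + z$, one has $(A\phi + g)(K) = T_A(L)$. The problem thus reduces to the projection-theoretic claim: for Lebesgue-a.e. $A \in M_n(\mathbb{R})$, the compact set $T_A(L)$ has nonempty interior in $\mathbb{R}^n$.

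This final step is the principal obstacle. Classical Marstrand-type theorems applied to the $n^2$-parameter family $\{T_A\}$ only guarantee that $T_A(L)$ has positive $n$-dimensional Lebesgue measure for a.e. $A$, which is strictly weaker than nonempty interior (fat Cantor sets are the standard witness). To upgrade, I would try a degree-theoretic argument: if a continuous selection $h : [0,1]^n \to \mathbb{R}^n$ with graph contained in $L$ existed, then $T_A(L)$ would contain the image of $y \mapsto Ay + h(y)$, a continuous self-map of $[0,1]^n$ with nonzero Brouwer degree about interior base points for generic $A$, giving nonempty interior by invariance of domain. The difficulty is that no such continuous section need exist, since the fibres $\{z : (y,z) \in L\}$ depend on $y$ only upper-semicontinuously. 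The most likely remedy is to enrich the probe: decompose $K = \bigsqcup_i K_i$ into countably many clopen Cantor subsets and replace $\mu$ by a product probe over independent matrix parameters $(A_i)$, one per piece, so that almost surely some single $K_i$ already contributes a ball to $(F+g)(K)$, bypassing the need for a global continuous selection.
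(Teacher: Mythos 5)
First, note that the paper does not actually prove this statement: Theorem \ref{doug1} is quoted from Dougherty \cite[Theorem 11]{doug}, so there is no internal proof to measure your attempt against. Your framework is the right one (exhibit a compactly supported probe measure and show every translate of the complement is null), and your verification that the set is Borel is correct. The fatal problem is the probe itself. Everything reduces to your claim that for every $g\in C_n(K)$ the set $E_g=\{A:\mathrm{int}((A\phi+g)(K))=\emptyset\}$ is Lebesgue null, and for the linear family $A\mapsto A\phi$ this is simply false, not merely unproven. Already for $n=1$, take $K=\{0,1\}^{\mathbb N}$, the standard surjection $\phi(x)=\sum_i x_i2^{-i}$, and $g(x)=\sum_i x_i3^{-i}$. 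Then $(a\phi+g)(x)=\sum_i x_ic_i$ with $c_i=a2^{-i}+3^{-i}$, and for every $a\ge 0$ one has $c_i-\sum_{j>i}c_j=3^{-i}/2>0$ (the $a$-terms cancel). Under this gap condition the subsum set is contained, for each $N$, in $2^N$ pairwise disjoint closed intervals of length $R_N=\sum_{j>N}c_j\to 0$ (distinct partial sums $\sum_{i\le N}\sigma_ic_i$ differ by more than $R_N$), so any interval it contains has length at most $R_N$ for every $N$; hence the image is nowhere dense for \emph{every} $a\ge 0$. Thus $E_g$ contains $[0,1]$ and your measure assigns positive mass to the relevant translate of the shy-candidate set.

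Neither of your proposed repairs closes this. The degree argument needs a continuous section of $L$ over $[0,1]^n$, which, as you note, need not exist --- in the example above $L$ is a totally disconnected self-affine set. The product probe over clopen pieces $K_i$ with independent matrices would need, via Borel--Cantelli, that each piece succeeds with \emph{positive} probability; but each $K_i$ is again a Cantor set, the same construction can be run on it, and if the single-piece success probability is $0$ then independence over countably many pieces yields nothing. So the proposal is not a proof and cannot be completed with a fixed finite-dimensional linear probe: the measure must be chosen from a far richer family of functions, adapted to the tree structure of $K$ across all scales so that for every $g$ the perturbed map almost surely acquires a stable surjection onto some small cube, which is essentially what Dougherty's construction does.
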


This result has the following immediate corollary.

\begin{cor}\label{doug2}
The following properties are prevalent in the space $C_n(X)$:
\begin{itemize}
\item[(1)] $\text{\emph{int}}\left(f(X) \right) \neq \emptyset$;
\item[(2)] $\dim_\text{\emph{T}} f(X) = \dim_\text{\emph{H}} f(X) = \dim_\text{\emph{P}} f(X) = \underline{\dim}_\text{\emph{B}} f(X) = \overline{\dim}_\text{\emph{B}} f(X) = n$;
\item[(3)] $0 < \mathcal{H}^n(f(X)) = \mathcal{P}^n(f(X)) < \infty$.
\end{itemize}
\end{cor}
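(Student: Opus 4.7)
The plan is to derive all three claims from Theorem \ref{doug1} by a simple restriction--extension transfer. Since $X$ is an uncountable compact metric space, the Cantor--Bendixson theorem combined with a standard Cantor-scheme construction yields a closed subset $K \subseteq X$ homeomorphic to the Cantor space. As $K$ is closed in the metric space $X$, Dugundji's extension theorem provides a continuous linear operator $E : C_n(K) \to C_n(X)$ with $E(g)|_K = g$ for every $g \in C_n(K)$. This is the tool I will use to push the probe measure from Theorem \ref{doug1} out of $C_n(K)$ into $C_n(X)$.

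Write $S := \{g \in C_n(K) : \mathrm{int}(g(K)) \neq \emptyset\}$ and $\tilde S := \{f \in C_n(X) : \mathrm{int}(f(X)) \neq \emptyset\}$. Both are Borel -- in fact $F_\sigma$ -- sets, because the evaluation map into the space of compact subsets of $\mathbb{R}^n$ with the Hausdorff metric is continuous and the property of containing a fixed closed ball is closed in that metric. Let $\nu$ be the compactly supported Borel probability measure on $C_n(K)$ witnessing prevalence of $S$ via Theorem \ref{doug1}, and define $\tilde\nu := E_{*}\nu$. Since $E$ is continuous it maps the compact support of $\nu$ to a compact set, so $\tilde\nu$ is a compactly supported Borel probability measure on $C_n(X)$. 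For any $h \in C_n(X)$, the restriction $h|_K$ lies in $C_n(K)$, so $\nu$-a.e.\ $g$ satisfies $g - h|_K \in S$; since $(E(g)-h)|_K = g-h|_K$ and $(E(g)-h)(X) \supseteq (E(g)-h)(K)$, it follows that $E(g)-h \in \tilde S$ for $\nu$-a.e.\ $g$. Pushing forward gives $\tilde\nu(\tilde S^c + h) = 0$ for every $h \in C_n(X)$, which is the definition of prevalence for $\tilde S$, proving (1).

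Parts (2) and (3) then follow automatically from (1). If $\mathrm{int}(f(X)) \neq \emptyset$, then $f(X) \subseteq \mathbb{R}^n$ contains an open ball, which has all five dimensions equal to $n$ and positive $n$-dimensional Hausdorff measure, forcing the same lower bounds on $f(X)$; the inclusion $f(X) \subseteq \mathbb{R}^n$ gives the matching upper bounds and establishes (2). For (3), the ball already shows $\mathcal{H}^n(f(X)) > 0$, and compactness of $f(X)$ places it inside a cube of finite $n$-dimensional packing measure, so $\mathcal{P}^n(f(X)) < \infty$; the equality $\mathcal{H}^n(f(X)) = \mathcal{P}^n(f(X))$ is then a consequence of both measures coinciding with Lebesgue measure on $\mathbb{R}^n$ under the standard normalisation. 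The only non-routine step is the transfer argument: one must secure a continuous (hence compactness-preserving) extension operator so that $\tilde\nu$ is a legitimate probe, and one must check that $\tilde S$ is Borel. Dugundji's theorem and the Hausdorff-metric description of $\tilde S$ handle both, after which the corollary reduces to a direct application of Theorem \ref{doug1}.
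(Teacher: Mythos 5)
Your proof is correct and follows essentially the same route as the paper: part (1) by applying Dougherty's theorem (Theorem \ref{doug1}) to a Cantor subset $K \subseteq X$, and parts (2)--(3) as immediate consequences of (1) together with $\mathcal{H}^n = \mathcal{P}^n$ on Borel subsets of $\mathbb{R}^n$. The paper leaves the transfer of prevalence from $C_n(K)$ to $C_n(X)$ implicit, whereas you carry it out in full with a Dugundji extension operator and a pushforward of the probe measure; this is exactly the intended argument, just made explicit.
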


\begin{proof}
Part (1) follows from Theorem \ref{doug1} since all uncountable compact metric spaces contain a subset $K \subseteq X$  homeomorphic to the Cantor space, see \cite[Theorem 11.11]{realanalysis}.  The result then follows since \cite[Proposition 8]{doug} and Theorem \ref{doug1}   imply  that the set of $f \in C_n(X)$  such that $ \text{{int}}\left(f(K) \right)  \neq \emptyset$ is prevalent.  Parts (2) and (3) follow immediately from part (1) and the fact that $\mathcal{H}^n=\mathcal{P}^n$ for the Borel subsets of $\mathbb{R}^n$, see \cite[Theorem 6.12]{mattila}.
\end{proof}

The fact that the $n$-dimensional Hausdorff measure of the image is prevalently positive and finite answers a question posed by Pablo Shmerkin to one of the authors in April 2012.  Contrary to the continuously differentiable case, the above corollary shows that in the space $C_n(X)$ the Hausdorff dimension is not preserved and in fact it is `almost surely' as large as possible.  The generic topological dimension of $f(X)$ has been studied in the Baire category setting by Kato \cite[Proposition 3.6 and Theorem 4.6]{kato}.

\begin{thm}[Kato] \label{kato1}
If $\dim_\text{\emph{T}} X < n$, then the set
\[
\{ f \in C_n(X) : \dim_\text{\emph{T}} f(X) \leq \dim_\text{\emph{T}} X \}
\]
is residual. If $\dim_\text{\emph{T}} X \geq n$, then the set
\[
\{ f \in C_n(X) : \dim_\text{\emph{T}} f(X) = n \}
\]
is residual.
\end{thm}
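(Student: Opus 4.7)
The plan is to prove both assertions by exhibiting a dense $G_\delta$ subset of $C_n(X)$, invoking the standard covering characterisation of topological dimension: a compact metric space $K$ satisfies $\dim_\text{T} K \leq s$ if and only if for every $\epsilon > 0$ it admits a finite open cover of mesh $< \epsilon$ and order $\leq s+1$.

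For part 1, let $t = \dim_\text{T} X$ and, for each $k \in \mathbb{N}$, set
$$
A_k = \bigl\{ f \in C_n(X) : \exists\ V_1,\dots,V_m \subseteq \mathbb{R}^n \text{ open with } \text{diam}(V_j) < 1/k,\ \text{order}(\{V_j\}) \leq t+1,\ f(X) \subseteq \textstyle\bigcup_j V_j \bigr\}.
$$
Each $A_k$ is open, because compactness of $f(X)$ forces a positive distance from $f(X)$ to the complement of $\bigcup_j V_j$, so any $g$ sufficiently close to $f$ in sup norm uses the same witnesses. For density, given $f$ and $\delta > 0$, use $\dim_\text{T} X = t$ to choose a finite open cover $\{U_i\}$ of $X$ of order $\leq t+1$ with $\text{diam}(f(U_i)) < \min(\delta, 1/k)$; pick $x_i \in U_i$, set $y_i = f(x_i)$, take a subordinate partition of unity $\{\phi_i\}$, and define $g = \sum_i \phi_i y_i$. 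A routine estimate gives $\|g - f\|_\infty < \delta$, and $g$ factors as $X \xrightarrow{\Phi} N(\mathcal U) \xrightarrow{\iota} \mathbb{R}^n$, where $\Phi$ is the nerve map and $\iota$ is affine on each simplex with $\iota(v_i) = y_i$. Since $N(\mathcal U)$ has polyhedral dimension $\leq t$ and PL maps do not increase polyhedral dimension, $\iota(N(\mathcal U))$ is a polyhedron of dimension $\leq t$ containing the closed set $g(X)$; hence $\dim_\text{T} g(X) \leq t$ and $g \in A_k$. Therefore $\bigcap_k A_k$ is residual and contained in $\{f : \dim_\text{T} f(X) \leq t\}$.

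For part 2, $\dim_\text{T} f(X) \leq n$ is automatic, and $\dim_\text{T} f(X) = n$ iff $\text{int}(f(X)) \neq \emptyset$. Let $B_k$ denote the interior in $C_n(X)$ of $\{f : B(y,1/k) \subseteq f(X) \text{ for some } y\}$; it suffices to show $\bigcup_k B_k$ is dense, since it is open and contained in $\{f : \text{int}(f(X)) \neq \emptyset\}$. Fix $f$ and $\epsilon > 0$. The closed sum theorem applied to $\dim_\text{T} X \geq n$ furnishes a point $x_0 \in X$ every open neighbourhood of which has topological dimension $\geq n$ (otherwise $X$ decomposes into countably many closed sets of dimension $< n$). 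Choose such an open neighbourhood $V$ small enough that $\text{diam}(f(\overline V)) < \epsilon/4$. Since $\dim_\text{T} \overline V \geq n$, there exists an essential map $\psi : \overline V \to [-1,1]^n$; classical dimension theory gives $\psi(\overline V) \supseteq (-1,1)^n$ and that every interior point is a stable value. Scale to $\tilde\psi = \sigma\psi$ with $\sigma$ comparable to $\epsilon$, extend by Tietze to $\Psi : X \to [-\sigma,\sigma]^n$, take a Urysohn cutoff $\chi$ equal to $1$ on $\overline V$ and supported in a slightly larger set on which $f$ varies by at most $\epsilon/4$, and set
$$
g = (1 - \chi) f + \chi \bigl( f(x_0) + \Psi \bigr).
$$
A direct estimate yields $\|g - f\|_\infty < \epsilon$ and $g|_{\overline V} = f(x_0) + \tilde\psi$, so that $g(X) \supseteq f(x_0) + (-\sigma,\sigma)^n \supseteq B(f(x_0), \sigma)$. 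The classical $C^0$-stability of essential maps then places $g$ in $B_k$ for all sufficiently large $k$.

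The chief obstacle is part 2: one must simultaneously localise via the closed sum theorem to find a small open set of topological dimension still $\geq n$, graft a scaled essential map into a perturbation of $f$ via Tietze--Urysohn to produce an image with interior, and verify that this perturbation is interior to the target set, which relies on the classical stability of essential maps under $C^0$-perturbation. Part 1 is cleaner but hinges on the standard fact that PL maps from finite polyhedra have polyhedral image of no larger dimension, which underlies the nerve construction.
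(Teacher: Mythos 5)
Your proof is correct, but note that the paper never proves this statement itself --- it is quoted from Kato --- and the routes by which the paper later recovers (and strengthens) it are different from yours in the first part and coincide with your second part only at the level of the historical sketch. For the first assertion the paper deduces the stronger typical bound $\underline{\dim}_\text{B} f(X) \leq \min\{n,\dim_\text{T}X\}$ (Lemma \ref{res1}): it remetrizes $X$ via Szpilrajn's theorem so that $\overline{\dim}_\text{B} X = \dim_\text{T} X$, uses Stone--Weierstrass to make Lipschitz maps dense, and uses semicontinuity of covering numbers to exhibit a $\mathcal{G}_\delta$ set; your nerve-and-partition-of-unity argument stays inside classical dimension theory, is closer to Kato's original proof, and gives exactly the topological-dimension statement but not the box-dimension strengthening. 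For the second assertion the paper's own mechanism is Lemma \ref{nwd1}, which shows via partitions and Proposition \ref{partitions} that $\{f : \dim_\text{T} f(X) < \min\{n,\dim_\text{T}X\}\}$ is nowhere dense; your argument through Alexandroff's essential maps and stable values is instead precisely the classical proof the paper sketches in the paragraph following the theorem, with your localisation to a neighbourhood of full dimension (via the countable closed sum theorem) supplying the density step the paper dismisses as ``straightforward''. Two points deserve to be made explicit rather than implicit: in Part 1, to conclude $g \in A_k$ you must swell a cover of the compact polyhedral image that is open in the subspace topology to open subsets of $\mathbb{R}^n$ with the same nerve (the standard swelling lemma); and in Part 2 the step that actually places $g$ in the \emph{interior} set $B_k$ is the implication from ``$f(x_0)$ is a stable value of $g\vert_{\overline{V}}$ with constant $\eta$'' to ``$B(f(x_0),\eta/2) \subseteq h(X)$ for every $h$ with $\|h-g\|_\infty < \eta/2$'', which is exactly the observation recorded in the paper after the theorem statement and should be invoked explicitly.
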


In fact, the second statement of the above theorem has older origins dating back to Hurewicz-Wallman and Alexandroff. If $f\in C_n(X)$, then $y\in f(X)$ is a \emph{stable value} of $f$ if there exists $\varepsilon>0$ such that for all $g\in B(f,\varepsilon)$ we have $y\in g(X)$. Clearly, if $y\in \mathbb{R}^n$ is a stable value of $f$ then $B(y,\varepsilon/2)\subseteq g(X)$ for all $g\in B(f,\varepsilon/2)$.  Hurewicz  and  Wallman \cite{hurwal} show that if $\dim_\text{T} X\geq n$, then there exists an $f\in C_n(X)$ that has a stable value. Thus it is enough to prove that such $f$s are dense, but this is straightforward.
\\ \\
Also, let $f\colon X\to B^n$ be an onto map, where $B^n$ denotes the closed unit ball in $\mathbb{R}^n$. Then $f$ is an \emph{essential map} if, whenever $g=f$ on $f^{-1}(\partial B^n)$, then $B^n \subseteq g(X)$.  Alexandroff \cite{A} shows that if $\dim_\text{T} X \geq n$ then there exists an essential map $f\in C_n(X)$, and one can use methods from algebraic topology to show that essential maps have stable values.
\\ \\
In this paper we examine the typical dimension and measure of $f(X)$.  We obtain precise results for all the notions of dimension described above and give necessary and sufficient conditions for the appropriate Hausdorff and packing measures to be positive and finite.  Interestingly, the `dimension preservation principle' holds in the typical case for the topological dimension, but not in general for any of the other dimensions.  In the topological dimension case, we obtain a sharpening of the above result of Kato, see Theorems \ref{typmain} and \ref{hausdecomp}.  In particular, the typical topological dimension of $f(X)$ is precisely $\min\{n,\dim_\text{T} X \}$.
\\ \\
Over the past 15 years there has also been considerable interest in studying the prevalent and typical dimensions of graphs of continuous real-valued functions, see \cite{ BH, me_horizon, me_prevalence, lowerprevalent, bairefunctions, humkepacking, graphsums, mcclure, shaw}, where the graph of $f \in C_n(X)$ is defined as
\[
G_f = \left\{ (x,f(x)) : x \in X \right\} \subseteq X \times \mathbb{R}^n.
\]
The most general result in the case of prevalence to date has been given by Bayart and Heurteaux \cite{BH}.

\begin{thm}[Bayart-Heurteaux] \label{BH1}
Let $X$ be a compact subset of $\mathbb{R}^m$ with positive Hausdorff dimension.  The set
\[
\{ f \in C_1(X) : \dim_\text{\emph{H}} G_f = \dim_\text{\emph{H}} X+1 \}
\]
is a prevalent subset of $C_1(X)$.
\end{thm}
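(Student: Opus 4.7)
The plan is to prove the complement $S^c = \{f \in C_1(X) : \dim_\H G_f < \dim_\H X + 1\}$ is shy. First note that continuous $f$ automatically satisfies the product-type upper bound $\dim_\H G_f \le \dim_\H X + \overline{\dim}_\text{B} f(X) \le \dim_\H X + 1$, since $f(X) \subseteq \mathbb{R}$ is bounded. Hence $S^c = \bigcup_{k\ge 1}\{f : \dim_\H G_f \le \dim_\H X + 1 - 1/k\}$, and because shy sets form a $\sigma$-ideal it suffices to show that for every $t < \dim_\H X$ the set $A_t = \{f \in C_1(X) : \dim_\H G_f \ge t+1\}$ is prevalent; intersecting over a sequence $t_k \nearrow \dim_\H X$ then yields the theorem.

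Fix $t < \dim_\H X$ and choose $H > 0$ with $t + H < \dim_\H X$; by Frostman's lemma there is a nonzero compactly supported measure $\sigma$ on $X$ with $E_{t+H}(\sigma) = \iint |x-y|^{-(t+H)}\,d\sigma(x)\,d\sigma(y) < \infty$. The main tool is the potential-theoretic lower bound: $\dim_\H G_f \ge t+1$ whenever the pushforward $\nu_f$ of $\sigma$ under $x \mapsto (x, f(x))$ has finite $(t+1)$-energy. For the probe I would introduce a random wavelet-type series $W(x) = \sum_k a_k \phi_k(x)$, with $(a_k)$ independent uniform on $[-1,1]$ and $\{\phi_k\}$ a deterministic family organized by dyadic scale: at scale $2^{-j}$ the bumps have amplitude $2^{-jH}$, are supported in balls of radius $2^{-j}$ covering $X$, and their sup norms are summable. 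Continuity of $(a_k) \mapsto \sum_k a_k\phi_k$ on the compact product $[-1,1]^\mathbb{N}$ then makes the pushforward a compactly supported Borel probability measure $\pi$ on $C_1(X)$. Crucially, the family is arranged so that for any $x, y \in X$ some single summand $a_k(\phi_k(x) - \phi_k(y))$ is uniform on an interval of width at least $c|x-y|^H$; since a sum of independent variables is no more concentrated than any one summand, the increment $W(x) - W(y)$ has density bounded above by $C|x-y|^{-H}$.

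Now fix any $f_0 \in C_1(X)$. By Fubini,
\begin{equation*}
\mathbb{E}\bigl[E_{t+1}(\nu_{f_0+W})\bigr] = \iint \mathbb{E}\!\left[\bigl(|x-y|^2 + (f_0(x)-f_0(y) + W(x) - W(y))^2\bigr)^{-(t+1)/2}\right]d\sigma(x)\,d\sigma(y),
\end{equation*}
and the density bound, combined with the elementary estimate $\int_\mathbb{R}(r^2 + u^2)^{-(t+1)/2}\,du \asymp r^{-t}$ for $t > 0$, gives an inner expectation of order $|x-y|^{-(t+H)}$. Hence $\mathbb{E}[E_{t+1}(\nu_{f_0+W})] \le C E_{t+H}(\sigma) < \infty$, so $\pi$-almost every $W$ satisfies $E_{t+1}(\nu_{f_0+W}) < \infty$ and therefore $\dim_\H G_{f_0+W} \ge t+1$. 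Equivalently $\pi(A_t^c - f_0) = 0$ for every $f_0$, i.e.\ $A_t^c$ is contained in a Borel shy set and so $A_t$ is prevalent.

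The main obstacle is the engineering of the deterministic family $\{\phi_k\}$ on a general compact $X \subseteq \mathbb{R}^m$ so that the width lower bound $|\phi_k(x) - \phi_k(y)| \gtrsim |x-y|^H$ holds for some $k = k(x,y)$ uniformly in $x, y \in X$. This can be arranged via a Whitney-type decomposition of $\mathbb{R}^m$: take each $\phi_k$ to be a smooth bump of amplitude $2^{-jH}$ (hence slope $\sim 2^{j(1-H)}$) supported on a cube of side $2^{-j}$ meeting $X$, arranged so the bumps separate any pair of points at the matching scale when restricted to $X$.
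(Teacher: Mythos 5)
The paper does not actually prove this statement: Theorem \ref{BH1} is imported from Bayart--Heurteaux \cite{BH}, and the surrounding text only records that their method is to push a Frostman measure on $X$ through $x\mapsto (x,f_0(x)+W(x))$ with $W$ a fractional Brownian motion indexed by $X$, the hypothesis $\dim_\H X>0$ being exactly what guarantees the Frostman measure exists. Your proposal follows the same skeleton --- reduce to showing $A_t=\{f:\dim_\H G_f\ge t+1\}$ is prevalent for each $0<t<\dim_\H X$, probe with the law of a random perturbation $W$, and conclude via Fubini and $\int_{\mathbb{R}}(r^2+u^2)^{-(t+1)/2}\,du\asymp r^{-t}$ once the increments $W(x)-W(y)$ are shown to have density $O(|x-y|^{-H})$ --- but you replace the Gaussian process by a random bump series with independent uniform coefficients. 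This is a genuine and arguably cleaner variant: the probe measure is literally compactly supported, as Definition \ref{prevalentdef} requires (with fBm one must first restrict the Gaussian law to a compact set of positive measure), and the anti-concentration step reduces to the elementary fact that convolving with a uniform law of width $c|x-y|^H$ cannot increase the supremum of a density, rather than to Gaussian computations.

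Three points still need to be nailed down; all are routine but should be recorded. First, the sup norms $\|\phi_k\|_\infty$ are not literally summable (there are of order $2^{jm}$ bumps of height $2^{-jH}$ at scale $j$), so continuity of $(a_k)\mapsto\sum_k a_k\phi_k$ on $[-1,1]^{\mathbb{N}}$ must instead be argued from the bounded overlap of supports within each scale, which bounds the scale-$j$ block by $C2^{-jH}$, summable in $j$. Second, the separation property ``for every $x\ne y$ some single $\phi_k$ has $|\phi_k(x)-\phi_k(y)|\ge c|x-y|^H$'' fails for a single dyadic grid, since $x$ may lie near the boundary of every cube at the relevant scale; finitely many translated grids (so that every point is deep inside some cube at every scale), plus a coarsest scale comparable to $\mathrm{diam}(X)$ to handle large $|x-y|$, repair this. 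Third, one should take $t>0$ so the one-dimensional integral converges at infinity (harmless, as $\dim_\H X>0$), and replace $A_t^{c}$ by the Borel superset $\{f: E_{t+1}(\nu_f)=\infty\}$ to meet the measurability requirement in the definition of a shy set. With these repairs the argument is complete and recovers the theorem without Gaussian processes.
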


The case where $X = [0,1]^m$ was proved by Fraser and Hyde \cite{me_prevalence}.  The method of proof used in \cite{BH} was to use fractional Brownian motion on $X$.  The assumption that $X$ has positive Hausdorff dimension was needed to guarantee the existence of an appropriate measure to use in the energy estimates.  Interestingly, this left open the case where $\dim_\H X = 0$.  Clearly if $X$ is finite or countable then the dimension of the graph is necessarily 0, so the only open case is when $X$ has cardinality continuum but is zero dimensional.  In this case, one can compute the prevalent dimension of the \emph{graph} by considering the prevalent dimension of the \emph{image} and so the study of graphs on zero dimensional sets falls naturally into our investigation.  We observe that the problem can be solved by applying Corollary \ref{doug2}.  In particular, one obtains:

\begin{cor} \label{0case}
Suppose $\dim_\text{\emph{H}} X = 0$.  Then the set
\[
\{f \in C_n(X) : \dim_\text{\emph{H}} G_f  = n \}
\]
is prevalent. If $\dim_\text{\emph{P}} X = 0$, then the set
\[
\{f \in C_n(X) :\dim_\text{\emph{H}} G_f  =   \dim_\text{\emph{P}} G_f = n \}
\]
is also prevalent.
\end{cor}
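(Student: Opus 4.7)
The plan is to bound $\dim_\text{H} G_f$ and $\dim_\text{P} G_f$ from above and below and observe that both bounds coincide when combined with Corollary \ref{doug2}. The two main ingredients are (i) the projection $\pi \colon X \times \mathbb{R}^n \to \mathbb{R}^n$, $(x,y) \mapsto y$, which is $1$-Lipschitz and sends $G_f$ onto $f(X)$, and (ii) the standard product inequalities
\[
\dim_\text{H}(A \times B) \leq \dim_\text{H} A + \dim_\text{P} B, \qquad \dim_\text{P}(A \times B) \leq \dim_\text{P} A + \dim_\text{P} B,
\]
valid for arbitrary bounded metric spaces (see the reference for products cited earlier in the paper).

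For the lower bounds, I would first note that since $\pi$ is $1$-Lipschitz,
\[
\dim_\text{H} G_f \geq \dim_\text{H} \pi(G_f) = \dim_\text{H} f(X) \quad \text{and} \quad \dim_\text{P} G_f \geq \dim_\text{P} f(X)
\]
for every $f \in C_n(X)$. By Corollary \ref{doug2}, the set $P_1 := \{f : \dim_\text{H} f(X) = n\}$ is prevalent, and likewise $P_2 := \{f : \dim_\text{P} f(X) = n\}$ is prevalent.

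For the upper bounds, I would use $G_f \subseteq X \times f(X) \subseteq X \times \mathbb{R}^n$, which gives, for every $f \in C_n(X)$,
\[
\dim_\text{H} G_f \leq \dim_\text{H} X + \dim_\text{P} f(X) \leq \dim_\text{H} X + n,
\]
and, analogously,
\[
\dim_\text{P} G_f \leq \dim_\text{P} X + \dim_\text{P} f(X) \leq \dim_\text{P} X + n.
\]
Under the hypothesis $\dim_\text{H} X = 0$ the first inequality yields $\dim_\text{H} G_f \leq n$ for \emph{every} $f$, so intersecting with $P_1$ gives the first prevalent set in the statement. Under $\dim_\text{P} X = 0$ (which implies $\dim_\text{H} X = 0$) the second inequality gives $\dim_\text{P} G_f \leq n$ for every $f$, so intersecting $P_1 \cap P_2$ (prevalent, since prevalence is closed under countable intersections) with these universal upper bounds produces the second prevalent set.

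There is really no serious obstacle here; the argument is a short combination of Corollary \ref{doug2} with elementary Lipschitz and product-inequality bookkeeping. The only point worth being careful about is to invoke the \emph{mixed} Hausdorff–packing product inequality for the $\dim_\text{H} G_f$ bound (so that no hypothesis on $\dim_\text{P} X$ is needed in the first part), and to use that prevalence is preserved under finite intersections when combining $P_1$ and $P_2$ in the second part.
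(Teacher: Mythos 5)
Your argument is correct and is essentially the paper's own proof: both obtain the lower bound $\dim_\text{H} f(X) \leq \dim_\text{H} G_f$ (and likewise for packing dimension) from the Lipschitz projection of $G_f$ onto $f(X)$, the upper bound from the product formulae for $X \times \mathbb{R}^n$, and then invoke Corollary \ref{doug2} to conclude that the two bounds coincide prevalently. Your explicit remarks about needing the mixed Hausdorff--packing product inequality and about closure of prevalence under finite intersections are just slightly more careful bookkeeping of the same steps.
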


\begin{proof}
This follows immediately from Corollary \ref{doug2} and the projection and product formulae for Hausdorff and packing dimension, see \cite[Chapter 6]{falconer} and \cite{products}.  In particular, the image $f(X)$ is the projection of the graph $G_f$ onto $\mathbb{R}^n$ and we obtain that for all $f \in C_n(X)$ we have
\[
 \dim_\text{H} f(X) \leq \dim_\text{H} G_f \leq \dim_\text{H} (X \times \mathbb{R}^n ) \leq  \dim_\text{\H} X + n
\]
and
\[
\dim_\text{P} f(X) \leq \dim_\text{P} G_f \leq \dim_\text{P} (X \times \mathbb{R}^n) \leq \dim_\text{P} X + n
\]
and since for a prevalent $f \in C_n(X)$ we have $ \dim_\text{H} f(X) =  \dim_\text{P} f(X) = n$, the result follows.
\end{proof}

A combination of Corollary \ref{0case} and the result of Bayart and Heurteaux gives.

\begin{thm} \label{fullprev}
Let $X$ be an uncountable compact subset of $\mathbb{R}^m$.  The set
\[
\{ f \in C_1(X) : \dim_\text{\emph{H}} G_f  = \dim_\text{\emph{H}} X +1 \}
\]
is a prevalent subset of $C_1(X)$.  If $X$ is finite or countable, then $\dim_\text{\emph{H}} G_f = 0$ for all $f \in C_1(X)$.
\end{thm}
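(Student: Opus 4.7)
The plan is to split into cases according to the Hausdorff dimension of $X$ and quote the two results already available, namely Theorem \ref{BH1} of Bayart--Heurteaux and Corollary \ref{0case}. There is essentially no new content; the point of the theorem is simply that these two results combine to cover every compact $X \subseteq \mathbb{R}^m$.

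First I would dispatch the easy second sentence. If $X$ is finite or countable, then, since $f \colon X \to \mathbb{R}$ is a function, the graph $G_f = \{(x,f(x)) : x \in X\}$ has the same cardinality as $X$ and is therefore finite or countable. Countable sets have Hausdorff dimension $0$, so $\dim_\text{H} G_f = 0$ for every $f \in C_1(X)$, as claimed.

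For the first (prevalence) sentence assume $X$ is uncountable. I would split on whether $\dim_\text{H} X > 0$. If $\dim_\text{H} X > 0$, then Theorem \ref{BH1} applies verbatim and gives prevalently $\dim_\text{H} G_f = \dim_\text{H} X + 1$. If instead $\dim_\text{H} X = 0$, then the target identity reads $\dim_\text{H} G_f = 1$. Since $X$ is uncountable and $\dim_\text{H} X = 0$, Corollary \ref{0case} (with $n=1$) yields that the set
\[
\{ f \in C_1(X) : \dim_\text{H} G_f = 1\}
\]
is prevalent in $C_1(X)$, which is exactly what we need. Taking the union of the two cases (noting that in each case $X$ falls under exactly one hypothesis) completes the proof.

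The only mild subtlety worth flagging is that Theorem \ref{BH1} and Corollary \ref{0case} between them cover all uncountable compact $X \subseteq \mathbb{R}^m$ precisely because $\dim_\text{H} X$ is either $0$ or positive; there is no gap and no overlap to reconcile. So there is no real obstacle, and the proof amounts to a two-line case split plus the cardinality observation for the finite or countable case.
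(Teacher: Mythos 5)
Your proposal is correct and matches the paper's own (essentially one-line) argument: the paper derives Theorem \ref{fullprev} precisely by combining Theorem \ref{BH1} for the case $\dim_\text{H} X > 0$ with Corollary \ref{0case} for the case $\dim_\text{H} X = 0$, with the countable case being the same trivial cardinality observation. There is nothing missing.
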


We remark here that a compact subset of $\mathbb{R}^m$ is either finite, countable or has cardinality continuum (see \cite{cies}, Corollary 6.2.5).

\section{Results in the Baire category setting} \label{results}

This is the main section of the paper where we will state our results on the typical dimension and measure of $f(X)$.  The proofs are deferred to the subsequent section.  Our first result concerns the typical dimensions of the image of $X$.

\begin{thm} \label{typmain}
For $f \in C_n(X)$, the properties
\[
\dim_\text{\emph{T}} f(X) =\dim_\text{\emph{H}} f(X) = \underline{\dim}_\text{\emph{B}}   f(X) = \min\{n,  \, \dim_\text{\emph{T}} X\}
\]
and
\[
\dim_\text{\emph{P}} f(X) = \overline{\dim}_\text{\emph{B}}  f(X)  = n
\]
are typical.
\end{thm}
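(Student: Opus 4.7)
The plan is to split the proof into two cases according to whether $\dim_{\text{T}} X \geq n$ or $\dim_{\text{T}} X = t < n$, handling within each the ``upper'' dimensions $(\dim_{\text{P}}, \overline{\dim}_{\text{B}})$ separately from the ``lower'' ones $(\dim_{\text{T}}, \dim_{\text{H}}, \underline{\dim}_{\text{B}})$. The case $\dim_{\text{T}} X \geq n$ is immediate from the second statement of Kato's Theorem \ref{kato1}: it gives typically $\dim_{\text{T}} f(X) = n$, while $f(X) \subseteq \mathbb{R}^n$ forces $\overline{\dim}_{\text{B}} f(X) \leq n$, and the chain of Proposition \ref{relationships} squeezes every dimension to $n = \min\{n, \dim_{\text{T}} X\}$.

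Assume now $\dim_{\text{T}} X = t < n$. To force $\overline{\dim}_{\text{B}} f(X) = \dim_{\text{P}} f(X) = n$ typically, I would prove that the sets
\[
V_{k,m} = \{f \in C_n(X) : \exists \, \delta \in (0, 1/k) \text{ with } N_\delta(f(X)) \geq \delta^{-(n - 1/m)}\}
\]
are open and dense; openness is a routine perturbation estimate for the $\delta$-covering number, and density exploits that every uncountable compact metric space contains a Cantor subset, on which one may place a space-filling perturbation of $f$ so that the image $g(X)$ contains a small $n$-cube. To force $\underline{\dim}_{\text{B}} f(X) \leq t$ (and hence $\dim_{\text{H}} f(X) \leq t$), the symmetric sets $U_{k,m} = \{f : \exists \, \delta \in (0, 1/k),\ N_\delta(f(X)) \leq \delta^{-(t + 1/m)}\}$ must also be shown open and dense. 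For density here I would invoke Hurewicz's characterisation of topological dimension: given $\eta > 0$ there exists an $\eta$-map $h : X \to P$ onto a compact $t$-dimensional polyhedron $P$. Triangulating $P$ finely, picking $x_v \in h^{-1}(v)$ for each vertex $v$ and setting $\tilde{G}(v) := f(x_v)$, then extending $\tilde{G}$ simplicially in general position (linear and injective on each simplex), the composition $g := \tilde{G} \circ h$ is uniformly close to $f$ and $g(X) \subseteq \tilde{G}(P)$ is a finite union of simplices of dimension at most $t$, so $\overline{\dim}_{\text{B}} g(X) \leq t$. Combined with the first statement of Kato's Theorem \ref{kato1}, the upper bounds $\dim_{\text{T}} f(X), \dim_{\text{H}} f(X), \underline{\dim}_{\text{B}} f(X) \leq t$ all hold typically.

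The remaining and most subtle step is the topological lower bound $\dim_{\text{T}} f(X) \geq t$ typically. I would invoke Hurewicz's dimension-fibre inequality
\[
\dim_{\text{T}} X \leq \dim_{\text{T}} f(X) + \sup_{y \in f(X)} \dim_{\text{T}} f^{-1}(y),
\]
valid for any continuous surjection between compact metric spaces, and show that for typical $f$ all fibres $f^{-1}(y)$ are zero-dimensional. Setting $\Omega_k := \{f : \text{some fibre } f^{-1}(y) \text{ has a connected component of diameter} \geq 1/k\}$, a Hausdorff-metric compactness argument along uniformly convergent $f_m \to f$ shows $\Omega_k$ is closed; the PL construction above, taken with $\eta < 1/k$ and $\tilde{G}$ in general position (so each simplex maps injectively, making every fibre $\tilde{G}^{-1}(y)$ finite), produces $g$ whose fibres $g^{-1}(y) = h^{-1}(\tilde{G}^{-1}(y))$ are finite unions of sets of diameter less than $1/k$, so $\Omega_k$ is nowhere dense. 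Hence $\bigcup_k \Omega_k$ is meagre, typical fibres are totally disconnected and thus zero-dimensional, and the Hurewicz inequality delivers $\dim_{\text{T}} f(X) \geq t$. Combined with Kato this yields $\dim_{\text{T}} f(X) = t$, and the chain $\dim_{\text{T}} \leq \dim_{\text{H}} \leq \underline{\dim}_{\text{B}}$ then forces equality throughout.

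The main obstacle is this last step: showing $\Omega_k$ is genuinely closed needs careful Hausdorff-metric compactness on fibre components under uniform convergence, and arranging a single general-position simplicial extension $\tilde{G}$ that simultaneously places $g$ in $U_{k,m}$ (giving small box dimension) and avoids $\Omega_k$ (keeping fibres of small diameter) requires the two perturbation constructions to be merged into one rather than carried out independently.
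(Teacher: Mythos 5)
Your proposal contains a genuine gap in the packing-dimension half of the theorem, and it comes in two parts. First, the sets $V_{k,m}$ are not open, and the ``routine perturbation estimate'' does not exist: the covering number $N_\delta$ is \emph{upper} semicontinuous on $(\mathcal{K}(\mathbb{R}^n),d_\mathcal{H})$ (an open $\delta$-cover of $K$ also covers every compact set sufficiently close to $K$), so $\{f : N_\delta(f(X))\geq \delta^{-(n-1/m)}\}$ is closed rather than open; if you try to repair a perturbation by shrinking $\delta$, the threshold $\delta^{-(n-1/m)}$ increases and the estimate goes the wrong way. This is exactly why the paper works with the packing number $M_\delta$ (maximal number of disjoint closed $\delta$-balls centred in the set), which is \emph{lower} semicontinuous (Lemma \ref{semicontinuity}), in Lemma \ref{res2}. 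Second, and more seriously, even after this repair you would only have shown that $\overline{\dim}_\text{B} f(X)=n$ typically; your proposal simply asserts that this gives $\dim_\text{P} f(X)=n$, but packing dimension is the countably stabilised version of upper box dimension, satisfies only $\dim_\text{P}\leq\overline{\dim}_\text{B}$, and the inequality can be strict (already for countable compact sets). Bridging this gap is the entire point of the paper's Lemma \ref{res3}: one fixes a countable family $\mathcal{A}(K)$ of Cantor subsets of a Cantor set $K\subseteq X$ such that every ball centred in $K$ contains a member, shows via the open restriction maps $P_{K_0}$ and Lemma \ref{extendingtypical} that typically $\overline{\dim}_\text{B} f(K_0)=n$ for \emph{all} $K_0\in\mathcal{A}(K)$ simultaneously, and then invokes the criterion \cite[Corollary 3.9]{falconer} that a compact set whose every relatively open piece has full upper box dimension has $\dim_\text{P}=\overline{\dim}_\text{B}$. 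Some such localisation argument is unavoidable and is missing from your proposal.

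The rest of your argument is essentially sound but follows a genuinely different route from the paper. For $\dim_\text{T} X\geq n$ you quote the second half of Theorem \ref{kato1} and squeeze; the paper instead reproves a sharper statement from scratch. For $\dim_\text{T} X=t<n$, the paper obtains the typical lower bound $\dim_\text{T} f(X)\geq t$ via the partition characterisation of dimension (Proposition \ref{partitions}, Lemma \ref{nwd1}), whereas you use Hurewicz's dimension-lowering inequality together with typical zero-dimensionality of fibres; your $\Omega_k$ are indeed closed by a Hausdorff-limit argument on fibre components, and since the sets $h^{-1}(p)$, $p\in\tilde{G}^{-1}(y)$, are pairwise disjoint compacta of diameter less than $1/k$, no component of $g^{-1}(y)$ can be large, so $\Omega_k$ is nowhere dense. (Indeed the paper's proof of Lemma \ref{measurelem2} records, via Kuratowski, that typically all fibres have at most $n$ points when $t<n$, which is stronger.) Likewise you replace the paper's use of Szpilrajn's theorem (\ref{szp}) and density of Lipschitz maps (Lemma \ref{dense2}) by $\eta$-maps onto $t$-dimensional polyhedra with a general-position simplicial extension; this also works, and the ``merging'' of the two perturbations you worry about is harmless since a single nerve-map construction of sufficiently small mesh achieves both conclusions at once. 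Minor points: the sets $U_{k,m}$ should be defined with strict inequality to be open, and Kato's first statement is redundant once $\underline{\dim}_\text{B} f(X)\leq t$ is typical, by Proposition \ref{relationships}.
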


We actually obtain finer information about the topological structure of $C_n(X)$ in terms of dimensions of images from which Theorem \ref{typmain} follows immediately, see Theorems \ref{hausdecomp} and \ref{packdecomp}.  We note an interesting corollary of Theorem \ref{typmain} is that the typical Hausdorff dimension is not in general as small as possible, but is always an integer.  At first sight this may be surprising as one often finds that \emph{typically} the Hausdorff dimension \emph{is} as small as possible, see for example  \cite{graphsums, me_typrandom}.  However, in our situation a more complex phenomenon is taking place.  The fact that the typical box dimensions are integers was suggested in \cite{mirzaie} although the proofs given there on the typical box dimensions are incorrect.
\\ \\
Our next two results give a precise topological description of $C_n(X)$ in terms of dimensions of images.
\begin{thm} \label{hausdecomp}
Let $\dim$ denote $\dim_\text{\emph{T}}, \, \dim_\text{\emph{H}}$ or $\underline{\dim}_\text{\emph{B}}$.  Then $C_n(X)$ is a disjoint union of the following three sets:
\begin{eqnarray*}
C_n(X) &=& \left\{ f \in C_n(X)  : 0 \leq \dim f(X) <  \min\{n,  \, \dim_\text{\emph{T}} X\} \right\}\\ \\
&\,& \quad \cup \ \left\{ f \in C_n(X)  :  \dim f(X) = \min\{n,  \, \dim_\text{\emph{T}} X \}\right\}\\ \\
&\,&  \quad \cup \ \left\{ f \in C_n(X)  :  \min\{n,  \, \dim_\text{\emph{T}} X \}< \dim f(X) \leq n \right\},
\end{eqnarray*}
where these sets are respectively:
\begin{itemize}
\item nowhere dense;
\item residual;
\item meagre but dense, unless $n \leq  \dim_\text{\emph{T}} X$ in which case it is empty.
\end{itemize}
\end{thm}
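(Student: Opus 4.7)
Write $t = \dim_\text{T} X$ and $d = \min\{n, t\}$. The proof splits into two regimes based on whether $t \geq n$ (so $d = n$) or $t < n$ (so $d = t$). In the first regime the third set is automatically empty since $f(X) \subseteq \mathbb{R}^n$ forces $\dim f(X) \leq n$ for every notion under consideration. The Hurewicz--Wallman stable-value discussion recalled just before the statement shows that $\{f \in C_n(X) : \text{\emph{int}}(f(X)) \neq \emptyset\}$ is open in sup norm (a stable value $y$ of $f$ forces $B(y, \varepsilon/2) \subseteq g(X)$ for every $g$ in some sup-norm ball about $f$) and dense in $C_n(X)$, and on this set $\dim f(X) = n$ simultaneously for $\dim_\text{T}$, $\dim_\text{H}$ and $\underline{\dim}_\text{B}$; so the middle set is residual and the first set lies in the closed nowhere-dense complement.

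In the regime $t < n$, residuality of the middle set splits into an upper bound $\dim f(X) \leq t$ and a lower bound $\dim f(X) \geq t$. The upper bound for $\dim_\text{T}$ is Kato's theorem. For $\dim_\text{H}$ and $\underline{\dim}_\text{B}$ I plan to use polyhedral approximation via nerves: since $\dim_\text{T} X \leq t$, $X$ admits finite open covers of arbitrarily small mesh with multiplicity at most $t+1$, whose nerves $K_m$ are $t$-dimensional simplicial complexes; composing the canonical nerve map $\kappa_m : X \to K_m$ with an affine extension $g_m : K_m \to \mathbb{R}^n$ of sample values of a given $f_0$ produces $f_m = g_m \circ \kappa_m$ with $\|f_m - f_0\|_\infty \to 0$ and image in the $t$-dimensional polyhedron $g_m(K_m)$, hence $\dim_\text{H} f_m(X)$ and $\underline{\dim}_\text{B} f_m(X)$ are both $\leq t$. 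Writing $\{f : \dim_\text{H} f(X) \leq s\}$ as a $G_\delta$ (a countable intersection of the open conditions ``$f(X)$ admits a finite cover by open balls of rational radii with $s$-sum less than $1/k$'') and $\{f : \underline{\dim}_\text{B} f(X) \leq s\}$ as $\bigcap_M \bigcup_{\delta < 1/M} \{f : f(X) \text{ is covered by at most } \lfloor \delta^{-s} \rfloor \text{ open balls of radius } \delta\}$ (also $G_\delta$ with open inner pieces), and intersecting over rational $s \downarrow t$, then yields residuality of the upper-bound sets.

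For the lower bound, the coordinate projection $\Phi : C_n(X) \to C_t(X)$ sending $f$ to its first $t$ components is a continuous open surjection and hence pulls residual subsets back to residual subsets. Applied to $\{g \in C_t(X) : \text{\emph{int}}(g(X)) \neq \emptyset\}$, which is open and dense in $C_t(X)$ since $\dim_\text{T} X \geq t$, this produces an open dense subset of $C_n(X)$ on which the underlying coordinate projection $\pi : \mathbb{R}^n \to \mathbb{R}^t$ sends $f(X)$ onto a set containing a $t$-ball; Lipschitz invariance of $\pi$ then gives $\dim_\text{H} f(X), \underline{\dim}_\text{B} f(X) \geq t$ throughout. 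The case of $\dim_\text{T}$ is more delicate because continuous projections may strictly decrease topological dimension, so instead I would invoke the classical dimension-theoretic fact that the set of \emph{light} maps (those with $0$-dimensional fibres) is residual in $C_n(X)$ whenever $n \geq \dim_\text{T} X$, and combine it with Hurewicz's inequality $\dim_\text{T} X \leq \dim_\text{T} f(X) + \sup_y \dim_\text{T} f^{-1}(y)$ to deduce $\dim_\text{T} f(X) \geq t$ residually. To upgrade residuality to the nowhere-density of the first set in the $\dim_\text{T}$ case one additionally needs an \emph{open} dense witness set, built by modifying $f_0$ on a closed subset $A \subseteq X$ with $\dim_\text{T} A = t$ to be an essential map into a small $t$-ball of $\mathbb{R}^t \times \{0\} \subseteq \mathbb{R}^n$ and exploiting sup-norm stability of essential maps.

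The third set is meagre as the complement of the residual middle set, and dense by a Peano-pocket perturbation: given $f_0$ and $\eta > 0$, choose a Cantor subset $A \subseteq X$ of diameter so small that $f_0(A) \subseteq B(p, \eta/2)$ for some $p \in \mathbb{R}^n$, pick a continuous surjection $A \to \overline{B(p, \eta/2)}$ (possible because Cantor spaces surject onto any non-empty compact metric space), and Tietze-extend the perturbation to all of $X$ with sup norm bounded by $\eta$; the resulting $f$ satisfies $f(X) \supseteq \overline{B(p, \eta/2)}$, hence $\dim f(X) = n > t$. The main obstacle is the $\dim_\text{T}$ lower bound, which genuinely strengthens Kato's theorem and resists the clean Lipschitz-projection argument available for $\dim_\text{H}$ and $\underline{\dim}_\text{B}$: one must import generic lightness and Hurewicz's formula from classical dimension theory, with stability of essential maps needed to promote the conclusion for the first set from merely meagre to nowhere dense.
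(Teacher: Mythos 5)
Your architecture is genuinely different from the paper's and most of it is viable. For the upper bound $\dim f(X)\le\min\{n,\dim_\text{T}X\}$ you use $\varepsilon$-mappings into $t$-dimensional nerves, where the paper instead invokes Szpilrajn's theorem to replace $X$ by a homeomorphic copy with $\overline{\dim}_\text{B}X_0=\dim_\text{T}X$ and then approximates by Lipschitz maps; both yield density of $\{f:\overline{\dim}_\text{B}f(X)\le t\}$, and your $G_\delta$ bookkeeping matches the paper's semicontinuity argument (your separate $G_\delta$ set for the Hausdorff bound is redundant once you have the $\underline{\dim}_\text{B}$ one). For the lower bound you pull back the stable-value set under the coordinate projection $C_n(X)\to C_t(X)$, which gives an \emph{open} dense set on which $\dim_\text{H}f(X),\underline{\dim}_\text{B}f(X)\ge t$ and hence nowhere density of the first set for those two dimensions; the paper instead proves a single lemma that $\{f:\dim_\text{T}f(X)<\min\{n,\dim_\text{T}X\}\}$ is nowhere dense, by contradiction via the partition characterisation of topological dimension, and then gets all three lower bounds from $\dim_\text{T}\le\dim_\text{H}\le\underline{\dim}_\text{B}$. (A small imprecision: $\{f:\text{int}(f(X))\ne\emptyset\}$ is not itself open; the open set is the set of $f$ possessing a stable value, which is what you actually use.)

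The genuine gap is the nowhere density of $\{f:\dim_\text{T}f(X)<t\}$ when $t=\dim_\text{T}X<n$. Light maps plus Hurewicz's inequality give only residuality of the complement (the light maps form a $G_\delta$ with empty interior, so this cannot be upgraded for free), and your proposed open dense witness --- make the first $t$ coordinates of $f_0$ restricted to a small full-dimensional closed set $A$ an essential map onto a $t$-ball and invoke stability --- only guarantees that for every nearby $h$ the \emph{projection} $\pi(h(A))\subseteq\mathbb{R}^t$ contains a $t$-ball. That does not bound $\dim_\text{T}h(X)$ from below: a compact subset of $\mathbb{R}^n$ can be zero-dimensional and still project onto a $t$-ball (for a Cantor set $C$, the set $(C\times\{0\})\cup(\overline{[0,1]\setminus C}\times\{1\})\subseteq\mathbb{R}^2$ projects onto $[0,1]$), which is exactly the failure of the projection argument for topological dimension that you yourself flag. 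To close the argument you need the further fact that an essential map onto $B^t$ cannot factor continuously through a compact space of topological dimension at most $t-1$; since $\pi\circ h|_A$ factors through $h(A)$, stable essentiality would then force $\dim_\text{T}h(A)\ge t$. That factorisation lemma is true and standard, but it is the load-bearing step and is missing from your sketch; without it the essential-map construction proves nothing more than the (insufficient) ball-in-the-projection statement. The paper's partition argument avoids this issue entirely.
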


We will prove Theorem \ref{hausdecomp} in Subsection \ref{typproofs1}.

\begin{thm} \label{packdecomp}
Let $\text{\emph{Dim}}$ denote $\dim_\text{\emph{P}}$ or $\overline{\dim}_\text{\emph{B}}$.  Then $C_n(X)$ is a disjoint union of the following three sets:
\begin{eqnarray*}
C_n(X) &=& \left\{ f \in C_n(X)  : 0 \leq \text{\emph{Dim}} f(X) < \min\{n,  \, \dim_\text{\emph{T}} X\} \right\}\\ \\
&\,&  \quad \cup \ \left\{ f \in C_n(X)  :  \min\{n,  \, \dim_\text{\emph{T}} X \} \leq \text{\emph{Dim}} f(X) < n \right\}\\ \\
&\,& \quad \cup \ \left\{ f \in C_n(X)  :  \text{\emph{Dim}} f(X) = n \right\},
\end{eqnarray*}
where these sets are respectively:
\begin{itemize}
\item nowhere dense;
\item meagre but dense, unless $n \leq  \dim_\text{\emph{T}} X$ in which case it is empty;
\item residual.
\end{itemize}
\end{thm}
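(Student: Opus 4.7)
Set $t=\min\{n,\dim_\text{T} X\}$ and let $A$, $B$, $C$ denote the three sets in the statement in the order listed. They are pairwise disjoint and their union is $C_n(X)$ because $0\le\text{Dim} f(X)\le n$ for every $f\in C_n(X)$. My plan is to establish residuality of $C$ first, nowhere-density of $A$ second, and then combine these with a direct construction to handle $B$.

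For residuality of $C=\{\text{Dim} f(X)=n\}$ the engine is a Cantor-plus-Tietze density construction. Given $f_0\in C_n(X)$ and $\varepsilon>0$, pick a Cantor subset $K\subseteq X$ with $\operatorname{diam} f_0(K)<\varepsilon/2$, fix $y_0\in f_0(K)$, use Hausdorff--Alexandroff to choose a continuous surjection $\varphi\colon K\to\overline{B}(y_0,\varepsilon/2)$, and Tietze-extend $\varphi-f_0|_K$ to $\psi\in C_n(X)$ with $\|\psi\|_\infty\le\varepsilon$; then $g=f_0+\psi$ is $\varepsilon$-close to $f_0$ and $g(X)\supseteq\overline{B}(y_0,\varepsilon/2)$, which forces $\overline{\dim}_\text{B} g(X)=n$ and, via Baire category applied inside the contained ball to any countable cover of $g(X)$, also $\dim_\text{P} g(X)=n$. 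To lift density to residuality I write $\{\overline{\dim}_\text{B} f(X)\ge n-1/k\}$ as the $G_\delta$ set $\bigcap_{M}\bigcup_{0<\delta<1/M}\{f:N_\delta(f(X))>\delta^{-(n-1/k)}\}$, where $N_\delta$ counts maximal $\delta$-separated subsets of the image; each piece $\{N_\delta(f(X))>C\}$ is open in $f$ because a witnessing $\delta$-separated tuple persists under small uniform perturbation. A parallel $G_\delta$ representation of $\{\dim_\text{P} f(X)\ge n-1/k\}$ is obtained through a local upper-box characterization of packing dimension combined with a compactness-and-rational-approximation argument that internalizes the existential over the base point inside $f(X)$.

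For nowhere-density of $A=\{\text{Dim} f(X)<t\}$ I show the interior of $A^c$ is dense. If $\dim_\text{T} X\ge n$, so $t=n$, then Hurewicz--Wallman provides $f\in C_n(X)$ with a stable value, and the translation trick in the excerpt makes $\{f:f\text{ has a stable value}\}$ open; density is obtained by yet another Cantor+Tietze construction, now arranged to realize an essential map on a subset of $X$ of topological dimension at least $n$, so that on this open dense set $g(X)\supseteq\overline{B}(y,\varepsilon/2)$ forces $\text{Dim} g(X)=n$. If $\dim_\text{T} X=t<n$ then stable values in $\mathbb{R}^n$ are unavailable, but the coordinate projection $\pi\colon\mathbb{R}^n\to\mathbb{R}^t$ is Lipschitz, so $\text{Dim} g(X)\ge\text{Dim}\pi(g(X))$; applying the previous case to $\pi\circ f\in C_t(X)$ shows that $\{f:\pi\circ f\text{ has a stable value in }\mathbb{R}^t\}$ is open (same translation trick in the first $t$ coordinates) and dense (by lifting Hurewicz--Wallman density in $C_t(X)$ through perturbation of the first $t$ coordinates of $f_0$), and on it $\pi(g(X))$ stably contains a ball of $\mathbb{R}^t$, whence $\text{Dim} g(X)\ge t$.

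The set $B$ is automatically meagre since $B\subseteq C^c$ and $C$ is residual. For density of $B$ when $t<n$ I use a nerve construction: given $f_0$ and $\varepsilon>0$, the covering characterization of topological dimension produces an open cover $\{U_i\}_{i\le N}$ of $X$ of mesh small enough that $\operatorname{diam} f_0(U_i)<\varepsilon/2$ and of order at most $t+1$, with some $(t+1)$-fold intersection non-empty. Taking a continuous partition of unity $\{\phi_i\}$ subordinate to $\{U_i\}$, selecting $x_i\in U_i$, and setting $g(x)=\sum_i\phi_i(x)f_0(x_i)$ gives $\|g-f_0\|_\infty<\varepsilon/2$, and $g$ factors as $A\circ\Phi$ where $\Phi(x)=(\phi_i(x))_i$ lands in the geometric realization of the nerve (of dimension $t$) and $A$ is the affine map sending vertex $i$ to $f_0(x_i)$. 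After a further arbitrarily small perturbation of the vertex values into general position in $\mathbb{R}^n$ (possible since $t+1\le n+1$), $A$ is non-degenerate on each $t$-simplex, so $A(|\mathcal{N}|)$ has box dimension exactly $t$, while the non-empty $(t+1)$-fold overlap ensures $\Phi(X)$ contains a relatively open $t$-disc in $|\mathcal{N}|$; hence $\text{Dim} g(X)=t$ and $g\in B$. The chief obstacle is writing $\{\dim_\text{P} f(X)\ge s\}$ as a $G_\delta$: packing dimension is not detected by a single open condition at each scale, so the local characterization must be combined carefully with compactness of $f(X)$ and countable rational approximation of the base point to yield the $G_\delta$ structure Baire category demands.
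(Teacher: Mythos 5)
Your decomposition into the three sets and the reduction ``$B$ is meagre because $C$ is residual'' are fine, and two of your ingredients are legitimate alternatives to the paper's (the stable-value route to nowhere-density of $A$ is essentially the Hurewicz--Wallman/Alexandroff argument the paper sketches, and your nerve construction gives density of $\{\overline{\dim}_\text{B}f(X)\le t\}$ without Luukkainen's metrization theorem). But there is a genuine gap at the central point, the residuality of $C=\{f:\dim_\text{P}f(X)=n\}$. Your plan is to exhibit $\{f:\dim_\text{P}f(X)\ge n-1/k\}$ itself as a $\mathcal{G}_\delta$ via ``a local upper-box characterization combined with compactness and rational approximation of the base point''. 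This does not work: the local characterization of packing dimension (Falconer's Corollary 3.9 and its refinements) says $\dim_\text{P}F\ge s$ for compact $F$ iff there \emph{exists a compact subset} $F'\subseteq F$ with $\overline{\dim}_\text{B}(F'\cap V)\ge s$ for every open $V$ meeting $F'$ --- the existential ranges over compact subsets, not over base points in $F$, so it cannot be internalized by a countable rational approximation; and indeed $\dim_\text{P}$ is not even a Borel function on $\mathcal{K}(\mathbb{R}^n)$ (Mattila--Mauldin), so no direct $\mathcal{G}_\delta$ description at the level of the image can be expected. You flag this yourself as ``the chief obstacle'' but never resolve it. The paper's resolution is different and is the key missing idea: fix a Cantor set $K\subseteq X$ and a \emph{countable} family $\mathcal{A}(K)$ of Cantor subsets such that every ball centred in $K$ contains one; for each $K_0\in\mathcal{A}(K)$ the set $\{f:\overline{\dim}_\text{B}f(K_0)=n\}$ is residual (your $M_\delta$-semicontinuity argument, pulled back through the open restriction map $C_n(X)\to C_n(K_0)$), and any $f$ in the countable intersection has $f(K)$ with full upper box dimension in every relatively open piece, whence $\dim_\text{P}f(X)\ge\dim_\text{P}f(K)=n$. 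One shows $C$ \emph{contains} a dense $\mathcal{G}_\delta$; one does not show $C$ is one.

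A second, smaller gap is in your density argument for $B$: a non-empty $(t+1)$-fold intersection of the cover only guarantees that $\Phi(X)$ \emph{meets} the open $t$-simplex of the nerve, not that it contains a relatively open $t$-disc, so your claim $\text{Dim}\,g(X)\ge t$ for the constructed $g$ is unjustified (the image of a $t$-dimensional compactum in a $t$-polyhedron can have empty interior, and can even drop box dimension after the affine map). The fix is the paper's: you do not need a single $g$ realizing both bounds. Your construction shows $\{f:\overline{\dim}_\text{B}f(X)\le t\}$ is dense; subtracting the nowhere dense set $\{f:\dim_\text{T}f(X)<t\}$ (whose nowhere-density you must prove anyway for $A$, noting $\dim_\text{T}\le\text{Dim}$) leaves a dense subset of $B$, since a dense set minus a nowhere dense set is dense. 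With those two repairs --- the countable family of Cantor subsets for $C$, and the dense-minus-nowhere-dense trick for $B$ --- your outline closes up.
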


We will prove Theorem \ref{packdecomp} in Subsection \ref{typproofs2}.  Finally, we obtain precise results on the typical Hausdorff and packing measures of $f(X)$.
\begin{thm} \label{measures}
We have the following dichotomy:
\begin{itemize}
\item[(1)] If $n \leq  \dim_\text{\emph{T}} X$, then for a typical $f \in C_n(X)$, we have
\[
\dim_\text{\emph{P}} f(X) = \dim_\text{\emph{H}} f(X)   = n
\]
and
\[
0\ < \ \mathcal{H}^n( f(X))  =  \mathcal{P}^n( f(X)) \ <  \ \infty.
\]
\item[(2)] If $n >  \dim_\text{\emph{T}} X$, then for a typical $f \in C_n(X)$, we have
\[
\dim_\text{\emph{H}} f(X)  =  \dim_\text{\emph{T}} X,
\]
\,
\[
\dim_\text{\emph{P}} f(X) = n,
\]
\,
\[
\mathcal{H}^{\dim_\text{\emph{T}} X}( f(X))  = \infty
\]
and
\[
\mathcal{P}^n( f(X))   =  0
\]
and, moreover, the measure $\mathcal{H}^{\dim_\text{\emph{T}} X} \vert_{f(X)}$ is not $\sigma$-finite.
\end{itemize}
\end{thm}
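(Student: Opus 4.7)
The plan is to handle the two cases separately; in each, the dimensional equalities follow from Theorem~\ref{typmain}, so only the measure assertions require new work. In Case~(1), $n \leq \dim_T X$: the equality $\mathcal{H}^n(f(X)) = \mathcal{P}^n(f(X))$ is immediate on Borel subsets of $\mathbb{R}^n$ (both coincide with a constant multiple of Lebesgue measure, as already used in Corollary~\ref{doug2}), and finiteness is automatic since $f(X)$ is compact. For positivity I would reuse the stable-value argument of Hurewicz--Wallman--Alexandroff recalled after Theorem~\ref{kato1}: the set of $f \in C_n(X)$ possessing a stable value is open by definition and dense by direct approximation, hence residual, and for any such $f$ the image $f(X)$ contains a non-empty open ball, so $\mathcal{H}^n(f(X)) > 0$.

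In Case~(2), set $t = \dim_T X$. For $\mathcal{P}^n(f(X)) = 0$, the identity $\mathcal{P}^n = c_n \mathcal{L}^n$ on Borel subsets of $\mathbb{R}^n$ reduces the claim to typicality of $\mathcal{L}^n(f(X)) = 0$, which I would split into openness and density. Openness of each $\{f : \mathcal{L}^n(f(X)) < 1/k\}$ follows from upper semi-continuity of Lebesgue measure of compact sets under the Hausdorff metric together with continuity of $f \mapsto f(X)$. For density, given $f_0 \in C_n(X)$ and $\varepsilon > 0$, I would use $\dim_T X \leq t$ to pick a finite open cover $\{U_i\}$ of $X$ of order at most $t+1$ with diameter small enough that $f_0$ oscillates by less than $\varepsilon$ on each $U_i$, choose $x_i \in U_i$ and a subordinate continuous partition of unity $\{p_i\}$, and set $g(x) = \sum_i p_i(x) f_0(x_i)$; then $\|g - f_0\|_\infty < \varepsilon$ and $g$ factors through the $t$-dimensional nerve of $\{U_i\}$ as a piecewise-linear map into $\mathbb{R}^n$, so $g(X)$ is a finite union of Lipschitz images of $t$-simplices with Hausdorff dimension at most $t < n$, in particular $\mathcal{L}^n(g(X)) = 0$. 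Intersecting the countably many open dense sets gives the residual statement.

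The typical infinitude and non-$\sigma$-finiteness of $\mathcal{H}^t|_{f(X)}$ is the hardest part. My plan is to show that $\{f : \mathcal{H}^t(f(X)) > M\}$ is residual for every $M \in \mathbb{N}$ by combining a density construction with a stability statement derived from Alexandroff's essential-map theorem. For density, given $f_0$ and $\varepsilon > 0$, I would select a Cantor subset $K \subseteq X$ (uncountable compact metric spaces always contain one) inside a small open ball on which $f_0$ has oscillation less than $\varepsilon/2$, pick a compact $S \subseteq \mathbb{R}^n$ of diameter less than $\varepsilon/2$ with $\mathcal{H}^t(S) = \infty$, take a continuous surjection $s: K \to S$ (Alexandroff--Urysohn), and Tietze-extend $k \mapsto f_0(k_0) + s(k)$ to a global $g \in C_n(X)$ which coincides with $f_0$ outside a slightly larger neighbourhood of $K$ and satisfies $\|g - f_0\|_\infty < \varepsilon$; then $g(X) \supseteq f_0(k_0) + S$ has $\mathcal{H}^t(g(X)) = \infty$. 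The principal obstacle is upgrading density to residuality, since $f \mapsto \mathcal{H}^t(f(X))$ is not continuous in the sup norm: I would work with the pre-measures $\mathcal{H}_\delta^t$ and their semi-continuity properties under Hausdorff convergence, combined with the observation that an essential map $\phi: X \to B^t$ (furnished by Alexandroff since $\dim_T X \geq t$) and the $1$-Lipschitz projection inequality $\mathcal{H}^t(\pi h(X)) \leq \mathcal{H}^t(h(X))$ produce a stable lower bound on $\mathcal{H}^t(h(X))$ for all $h$ in a sup-norm neighbourhood of a suitable perturbation of $f_0$. For non-$\sigma$-finiteness, I would refine the construction by running it simultaneously over a countable basis of open balls in $\mathbb{R}^n$, so that typically the measure $\mathcal{H}^t|_{f(X)}$ admits no countable decomposition into finite-measure pieces; the precise combinatorial verification of non-$\sigma$-finiteness is a further non-trivial step.
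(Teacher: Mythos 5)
Your treatment of part (1) and of the assertion $\mathcal{P}^n(f(X))=0$ in part (2) is essentially sound, and in places takes a different but valid route from the paper: for positivity in (1) you use stable values (an open, dense, hence residual condition giving a ball inside $f(X)$), whereas the paper applies Szpilrajn's theorem $\mathcal{H}^{\dim_\text{T} Y}(Y)>0$ to $Y=f(X)$ together with Theorem \ref{typmain}; for $\mathcal{P}^n(f(X))=0$ you use upper semicontinuity of $\mathcal{L}^n$ on $\mathcal{K}(\mathbb{R}^n)$ plus a nerve approximation, whereas the paper shows each set $\{f:\mathcal{P}^n(f(X))>1/m\}$ is nowhere dense via the density of $\mathcal{D}_2$ and a volume estimate for $\rho$-neighbourhoods of sets of upper box dimension at most $t$. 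These pieces are interchangeable.

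The genuine gap is the claim that $\mathcal{H}^{t}(f(X))=\infty$ and that $\mathcal{H}^{t}\vert_{f(X)}$ is not $\sigma$-finite, where $t=\dim_\text{T} X<n$. Your density construction is fine, but the proposed upgrade to residuality cannot work as described. The stable lower bound you extract from an essential map into $B^t$ and the $1$-Lipschitz projection $\pi\colon\mathbb{R}^n\to\mathbb{R}^t$ has the form $\mathcal{H}^t(h(X))\geq\mathcal{H}^t(\pi h(X))\geq c$ for a \emph{fixed} $c>0$; since $\pi h(X)$ is a bounded subset of $\mathbb{R}^t$, its $\mathcal{H}^t$ measure is a constant times its Lebesgue measure and hence uniformly bounded on any sup-norm bounded set of $h$. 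So this mechanism can never certify $\mathcal{H}^t(h(X))>M$ for large $M$, let alone infinitude or non-$\sigma$-finiteness: the mass must come from how the image spreads into the remaining $n-t$ coordinates, and neither $\mathcal{H}^t_\delta$ (which, like $N_\delta$, is \emph{upper} rather than lower semicontinuous under Hausdorff convergence, so it bounds measures from above on neighbourhoods, not below) nor packing many stable balls into a region of diameter $\varepsilon$ (whose total $\mathcal{H}^t$ content is $O(\varepsilon^t)$) yields an open condition forcing large measure. The paper's argument is genuinely different and is what is actually needed here: it combines Kato's theorem (for a residual set of $f\in C_t(X)$ there is $S_f\subseteq f(X)$ with $\mathcal{H}^t(S_f)>0$ over which every fibre has cardinality continuum, pulled back to $C_n(X)$ via the open coordinate projection $P$), Hurewicz's theorem that residually every fibre of $f\in C_n(X)$ has at most $n$ points, and the Marstrand--Mattila intersection theorem: were $\mathcal{H}^t\vert_{f(X)}$ $\sigma$-finite, almost every slice $f(X)\cap(W+y)$ would be at most countable, contradicting that these slices have cardinality continuum for a set of $y$ of positive $\mathcal{H}^t$ measure. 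Without an idea of this kind your proof of the central assertion of part (2) does not close.
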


We will prove Theorem \ref{measures} in Subsection \ref{typproofs3}.  It is interesting to note that a similar dichotomy was observed in \cite[Theorem 2.5]{me_typrandom} when studying Hausdorff and packing measures of typical random self-similar fractals.

\section{Proofs}

In this section we will prove our main results. All balls and neighbourhoods are assumed to be open unless stated otherwise. We write $B(x,r)$ to denote the open ball centered at $x$ with radius $r$.

\subsection{Proofs concerning the topological structure of $C_n(X)$} \label{keytypproofs}

In this subsection we will prove a sequence of lemmas which will provide a detailed description of the topological structure of $C_n(X)$ in terms of the dimensions of images of $X$.  In the following two subsections we will prove Theorems \ref{hausdecomp} and \ref{packdecomp}, which will follow easily from the results in this subsection.
\\ \\
Recall that, given disjoint sets, $A,B \subseteq X$, a set $P \subseteq X$ is called a \emph{partition} between $A$ and $B$ if there exists open sets $U \supseteq A$ and $V \supseteq B$ such that $U \cap V = \emptyset$ and $P = X \setminus (U \cup V)$.  We will utilise the following result relating partitions to topological dimension, see \cite[Theorem 1.7.9]{engel}.
\begin{prop} \label{partitions}
For a separable metric space $X$, the following statements are equivalent:
\begin{itemize}
\item[(1)] For all collections $(A_1, B_1), \dots, (A_k, B_k)$ of pairs of disjoint closed subsets of $X$  there exists partitions $P_i$ between $A_i$ and $B_i$ such that $\bigcap_{i=1}^{k} P_i = \emptyset$;
\item[(2)] $\dim_\text{\emph{T}} X \leq k-1$.
\end{itemize}
\end{prop}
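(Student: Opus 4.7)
The plan is to prove the equivalence by induction on $k$, using a well-known reformulation of topological dimension in terms of partitions of small dimension. Specifically, the first auxiliary step is to establish the folklore equivalence
\[
\dim_\text{T} X \leq k-1 \iff \text{every disjoint closed pair } (A,B) \subseteq X \text{ admits a partition } P \text{ with } \dim_\text{T} P \leq k-2,
\]
with the convention $\dim_\text{T} \emptyset = -1$. This follows directly from the inductive definition of $\dim_\text{T}$: a partition $P$ between $A$ and $B$ is of the form $X \setminus (U \sqcup V)$ with $\overline{U}$ disjoint from $B$, hence contains $\partial U$; conversely a small neighbourhood of $A$ disjoint from $B$ with low-dimensional boundary produces a partition of low dimension.

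\textbf{Base case $k=1$.} Condition (1) demands an \emph{empty} partition between any two disjoint closed sets, i.e. $X$ is the disjoint union of two clopen sets separating them. For separable metric spaces this is the standard characterisation of $\dim_\text{T} X \leq 0$.

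\textbf{Induction step, (2) $\Rightarrow$ (1).} Assume $\dim_\text{T} X \leq k-1$ and let $(A_i,B_i)_{i=1}^{k}$ be disjoint closed pairs. By the reformulation there is a partition $P_1$ between $A_1$ and $B_1$ with $\dim_\text{T} P_1 \leq k-2$. The $k-1$ pairs $(A_i \cap P_1, B_i \cap P_1)$ are disjoint closed in the separable metric space $P_1$, so the inductive hypothesis produces partitions $Q_2, \dots, Q_k$ in $P_1$ with $\bigcap_{i=2}^{k} Q_i = \emptyset$. A standard extension lemma (any partition in a closed subspace between closed sets can be extended to one in the ambient space, thanks to normality) yields partitions $P_i$ in $X$ between $A_i$ and $B_i$ whose traces on $P_1$ lie inside $Q_i$. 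Then
\[
\bigcap_{i=1}^{k} P_i \;\subseteq\; P_1 \cap \bigcap_{i=2}^{k} Q_i \;=\; \emptyset.
\]

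\textbf{Induction step, (1) $\Rightarrow$ (2).} Conversely, assume (1). To verify the reformulated dimension condition, fix a disjoint closed pair $(A,B)$ in $X$ and any partition $P_1$ between them. We will show $P_1$ inherits the $(k-1)$-partition property, which by the inductive hypothesis gives $\dim_\text{T} P_1 \leq k-2$ and hence $\dim_\text{T} X \leq k-1$. To do so, take any $k-1$ disjoint closed pairs in $P_1$, extend them to disjoint closed pairs in $X$, apply (1) with $(A,B)$ together with these $k-1$ extra pairs to obtain partitions whose intersection in $X$ is empty, and intersect them with $P_1$ to get the required partitions in $P_1$.

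\textbf{Main obstacle.} The technical heart is the \emph{partition extension lemma} for normal spaces: any partition between two closed sets of a closed subspace extends to a partition between prescribed closed sets in the ambient space. This is a Urysohn-type construction using the normality afforded by separable metrisability, and it is the only nontrivial input not immediately contained in the inductive definition of $\dim_\text{T}$. Careful bookkeeping in the (1) $\Rightarrow$ (2) direction, where restrictions of partitions must remain partitions, is the other delicate point.
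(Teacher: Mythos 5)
First, for calibration: the paper does not prove this proposition at all — it is the Eilenberg--Otto theorem on partitions, quoted with a citation to Engelking's \emph{Dimension Theory}, Theorem 1.7.9 — so your attempt has to stand on its own. Your direction (2) $\Rightarrow$ (1) is essentially one of the two standard arguments: it rests on the separation-theorem reformulation (existence of a partition of topological dimension $\leq k-2$, which for separable metric spaces is the coincidence $\operatorname{ind}=\operatorname{Ind}=\dim_\text{T}$ — a real theorem, not an immediate consequence of the inductive definition, but fair to quote) together with the partition extension lemma (Engelking, Lemma 1.7.8). Engelking instead decomposes $X$ into $k$ zero-dimensional pieces and chooses each $P_i$ to miss one piece, but your route is also standard and correct modulo these quoted inputs.

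The direction (1) $\Rightarrow$ (2) has a genuine gap, and it sits exactly where the real content of the theorem lies. You fix a pair $(A,B)$, take \emph{any} partition $P_1$ between them, and claim $P_1$ inherits the $(k-1)$-pair property by applying (1) to $(A,B)$ together with $k-1$ auxiliary pairs and intersecting with $P_1$. But the partition between $A$ and $B$ that (1) returns is some $P_1'$ depending on the auxiliary collection, not your fixed $P_1$: the conclusion $\bigcap_{i=1}^{k}P_i'=\emptyset$ gives $\bigcap_{i\geq 2}\bigl(P_i'\cap P_1'\bigr)=\emptyset$ and says nothing about $\bigcap_{i\geq 2}\bigl(P_i'\cap P_1\bigr)$. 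Moreover the claim as stated is simply false: in $X=[0,1]^2$, which satisfies (1) with $k=3$, the set $P_1=[1/3,2/3]\times[0,1]$ is a partition between the left and right edges, yet $\dim_\text{T}P_1=2$, so it cannot satisfy the $2$-pair property; your induction would force $\dim_\text{T}P_1\leq 1$. What is actually needed is that \emph{some} partition between $A$ and $B$ works for \emph{all} collections of $k-1$ pairs simultaneously, and since the partition your argument produces varies over the uncountably many collections, no single such partition is exhibited. Closing this — e.g.\ by reducing to a countable base-derived family of pairs, or by the contrapositive via essential mappings — is the substance of the theorem, not bookkeeping, so the proof as written does not go through.
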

\begin{lma} \label{nwd1}
The set
\[
\mathcal{N}_1 = \left\{ f \in C_n(X)  : \dim_\text{\emph{T}} f(X) <  \min\{n,  \, \dim_\text{\emph{T}} X \} \right\}
\]
is nowhere dense.
\end{lma}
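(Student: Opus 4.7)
The plan is to show that every ball $B(g,\eta)\subseteq C_n(X)$ contains some $f_0$ together with a sub-ball $B(f_0,\delta)\subseteq B(g,\eta)$ on which every $h$ satisfies $\dim_\text{T} h(X)\geq m$, where $m:=\min\{n,\dim_\text{T} X\}$. This forces $\overline{\mathcal{N}_1}$ to have empty interior. The construction combines the Hurewicz--Wallman stable value theorem (recalled in the excerpt) with a Tietze extension argument, localised to a subset of $X$ where $g$ is nearly constant.

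First, cover $g(X)$ by finitely many closed balls of radius $\eta/4$; pulling back through $g$ and applying the finite sum theorem for topological dimension yields a closed $A\subseteq X$ with $\dim_\text{T} A\geq m$ and $g(A)\subseteq\overline{B(c,\eta/4)}$ for some $c\in\mathbb{R}^n$. I would then apply Hurewicz--Wallman to $A$ (with target $\mathbb{R}^n$ if $m=n$, and target $\mathbb{R}^m\hookrightarrow\mathbb{R}^n$ via the first $m$ coordinates if $m<n$) and rescale to obtain $\phi\in C_n(A)$ with $\phi(A)\subseteq B(c,\eta/4)$ and a stable value at $c$ (respectively $\pi_m c$) with some witness $\varepsilon>0$. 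The vector-valued Tietze extension theorem then extends $\phi-g|_A$ (set to $0$ outside an open $V\supseteq A$ with $g(V)\subseteq B(c,\eta/2)$) to $\tilde\psi\in C_n(X)$ of sup-norm $\leq\eta/2$; then $f_0:=g+\tilde\psi$ satisfies $\|f_0-g\|_\infty<\eta$ and $f_0|_A=\phi$.

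To prove stability, take any $h\in B(f_0,\varepsilon/2)$, so that $\|h|_A-\phi\|_\infty<\varepsilon/2$ on $A$. If $m=n$ the stable value property gives $\overline{B(c,\varepsilon/4)}\subseteq h(A)\subseteq h(X)$, hence $\dim_\text{T} h(X)=n$. If $m<n$, the stable value property applied to $\pi_m h|_A$ yields an $m$-ball $B'\subseteq\pi_m h(A)$ and shows that $\pi_m h|_T:T\to B'$ is essential in the Alexandroff sense, where $T:=(\pi_m h|_A)^{-1}(B')$. The factorisation $\pi_m h|_T=\pi_m|_S\circ h|_T$ with $S:=h(T)\subseteq h(X)$ then transfers essentiality to $\pi_m|_S:S\to B'$: any $\psi:S\to B'$ that agrees with $\pi_m$ on $S\cap\pi_m^{-1}(\partial B')$ satisfies $\psi\circ h|_T=\pi_m h|_T$ on $(\pi_m h|_T)^{-1}(\partial B')$, whence essentiality of $\pi_m h|_T$ forces $\psi\circ h|_T$, and therefore $\psi$ itself, onto $B'$. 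By Alexandroff's theorem (equivalently Proposition~\ref{partitions}) this implies $\dim_\text{T} S\geq m$, so $\dim_\text{T} h(X)\geq m$.

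The main subtlety will be this factorisation-essentiality argument in the case $m<n$: since a continuous projection can \emph{raise} topological dimension (as the Peano graph $\{(t,\gamma(t)):t\in[0,1]\}\subseteq\mathbb{R}^3$ illustrates, where a $1$-dimensional arc projects onto $[0,1]^2$), the surjectivity $\pi_m h(A)\supseteq B^m$ by itself does not yield $\dim_\text{T} h(A)\geq m$, and one must work with essentiality rather than mere surjectivity in order to invoke the Alexandroff characterisation of topological dimension.
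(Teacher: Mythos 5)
Your proof is correct, but it takes a genuinely different route from the paper's. The paper argues by contradiction via the partition characterisation of topological dimension (Proposition \ref{partitions}): assuming $\mathcal{N}_1$ is dense in some ball $B(f,r)$, it localises to a compact $X_0\subseteq X$ with $\dim_\text{T}X_0=\dim_\text{T}X$ on which $f$ has oscillation less than $r/t$, perturbs the first $t=\min\{n,\dim_\text{T}X\}$ coordinates of $f$ so as to separate the images of $t$ arbitrary pairs of disjoint closed subsets of $X_0$ while landing in $\mathcal{N}_1$, and then pulls back from the image, which has topological dimension at most $t-1$, a family of $t$ partitions with empty intersection, forcing $\dim_\text{T}X_0\leq t-1$, a contradiction. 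You instead argue directly, planting a Hurewicz--Wallman map with a stable value on a high-dimensional piece of $X$ where $g$ is nearly constant and checking that the stable value persists on a whole sub-ball of $C_n(X)$; crucially, for $m<n$ you identify and correctly repair (via the essentiality transfer through the factorisation $\pi_m h=\pi_m\circ h$) the pitfall that surjectivity of $\pi_m h|_A$ onto an $m$-ball does not by itself bound $\dim_\text{T}h(A)$ from below. Your version is constructive --- it exhibits explicit balls disjoint from $\mathcal{N}_1$ and, when $m=n$, even shows the images have non-empty interior --- and it reuses the stable-value and essential-map machinery the paper recalls in its introduction; the paper's version is shorter and needs only Proposition \ref{partitions} and Tietze. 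The one step you should prove or cite explicitly is the standard fact that a map $f$ with stable value $y$ and witness $\varepsilon$ restricts to an essential map of $T=f^{-1}\bigl(\overline{B(y,\delta)}\bigr)$ onto $\overline{B(y,\delta)}$ whenever $\delta<\varepsilon/2$: paste any competitor $\psi$ (agreeing with $f$ on the preimage of the boundary sphere) together with $f$ outside the preimage of the open ball to obtain a perturbation of $f$ within $2\delta$, which forces $y\in\psi(T)$; then compose with a radial retraction off any allegedly omitted interior point to get a contradiction, and use compactness of $\psi(T)$ to capture the boundary. With that lemma in place your argument is complete.
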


\begin{proof}
Let $t = \min\{n,  \, \dim_\text{T} X \}$.  Assume to the contrary that for some $f \in C_n(X)$ and $r>0$, $\mathcal{N}_1$ is dense in $B(f,r)$.  Since $f$ is uniformly continuous there exists $\delta>0$ such that if $X_0 \subseteq X$ with $\text{diam}(X_0) \leq \delta$, then $\text{diam}(f(X_0)) < r/t$.  Now decompose $X$ into finitely many compact sets with diameter less than or equal to $\delta$.  Since topological dimension is stable under taking finite (or countable) unions of closed sets, see \cite[Theorem 1.5.3.]{engel}, at least one of the sets in this decomposition has the same topological dimension as $X$.  Fix such a set $X_0 \subseteq X$ with $\text{diam}(X_0) \leq \delta$ and $\dim_\text{T} X_0 = \dim_\text{T} X$ and note that $\text{diam}(f(X_0)) < r/t$.
\\ \\
Let $(A_1, B_1), \dots, (A_t, B_t)$ be arbitrary pairs of disjoint closed subsets of $X_0$.  We will construct partitions $\{P_i\}_{i=1}^{t}$ with $\bigcap_{i=1}^{t} P_i = \emptyset$ from which it follows that $\dim_\text{T} X_0 \leq t-1< \dim_\text{T} X$ which is a contradiction.  Let $f_1, \dots, f_n \in C_1(X)$ be such that $f(x) = (f_1(x), \dots, f_n(x))$ and observe that we may construct a set of functions $\{g_i\}_{i=1}^{t}$ such that
\begin{itemize}
\item[(1)] $g_i \in B(f_i, r/t)$ for each $i \in \{1, \dots, t\}$;
\item[(2)] $g_i(A_i) \cap g_i(B_i) = \emptyset$ for each $i \in \{1, \dots, t\}$;
\item[(3)] There exists functions $g_{t+1}, \dots, g_n \in C_1(X)$ such that the function $g \in C_n(X)$ defined by $g(x) = (g_1(x), \dots, g_t(x), g_{t+1}(x), \dots, g_n(x))$ is such that $g \in \mathcal{N}_1 \cap B(f,r)$.
\end{itemize}
We can do this in the following way. Let $i \in \{1, \dots, t\}$. Since $\text{diam}(f_i(X_0)) \leq \text{diam}(f(X_0)) < r/t$, we can define $g_i$ first on $A_i \cup B_i$, mapping into a sufficiently small neighbourhood of $f_i(X_0)$ such that $g_i(A_i) \cap g_i(B_i) = \emptyset$, and then extend it to the whole of $X$ by Tietze's Extension Theorem so that it satisfies properties (1) and (2). Observe that clearly we can choose $g_{t+1}, \dots, g_n$ such that $g \in B(f, r)$ and we can also assume $g \in \mathcal{N}_1$ since $\mathcal{N}_1$ is dense in $B(f, r)$.
\\ \\
Since $g \in \mathcal{N}_1$, we have $\dim_\text{T} g(X_0) \leq t-1$, and by Proposition \ref{partitions} there exists partitions $Q_i$ between $g(A_i)$ and $g(B_i)$ such that $\bigcap_{i=1}^{t} Q_i = \emptyset$.  Finally, observe that  $P_i := (g \vert_{X_0})^{-1}(Q_i)$ is a partition between $A_i$ and $B_i$ and
\[
\bigcap_{i=1}^{t} P_i \ = \  \bigcap_{i=1}^{t} \, (g \vert_{X_0})^{-1}(Q_i)  \ = \ (g \vert_{X_0})^{-1} \left( \bigcap_{i=1}^{t} Q_i  \right) \ = \  \emptyset
\]
which yields our contradiction.
\end{proof}

\begin{lma} \label{dense1}
The set
\[
\mathcal{D}_1 = \left\{ f \in C_n(X) : \dim_\text{\emph{T}} f(X) = n \right\}
\]
is dense.
\end{lma}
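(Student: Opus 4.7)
The plan is to show that, for any $f \in C_n(X)$ and $\varepsilon > 0$, one can perturb $f$ by less than $\varepsilon$ (in the sup norm) to a function $g$ whose image contains a closed ball of $\mathbb{R}^n$. Since a closed $n$-ball has topological dimension $n$ and $g(X)\subseteq \mathbb{R}^n$, monotonicity of $\dim_{\text{T}}$ on separable metric spaces will then force $\dim_{\text{T}} g(X)=n$.

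First, using uniform continuity of $f$, choose $\delta>0$ so that $\mathrm{diam}(f(X_0)) < \varepsilon/3$ whenever $X_0\subseteq X$ has diameter $\leq \delta$. Decompose $X$ as a finite union of compact sets of diameter at most $\delta$; since $X$ is uncountable, at least one member $X_0$ of this decomposition is uncountable. By the standard fact that every uncountable compact metric space contains a topological copy of the Cantor space (\cite[Theorem 11.11]{realanalysis}), fix $K\subseteq X_0$ homeomorphic to the Cantor space.

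Next, fix a point $x_0\in K$ and let $B\subseteq \mathbb{R}^n$ be the closed ball of radius $\varepsilon/3$ centred at $f(x_0)$. By the Hausdorff--Alexandroff theorem, every non-empty compact metric space is a continuous image of the Cantor space, so there exists a continuous surjection $\varphi\colon K\to B$. Define $g_0\colon K\to\mathbb{R}^n$ by $g_0:=\varphi$, and observe that for every $x\in K$,
\[
\|g_0(x)-f(x)\| \ \leq\ \|g_0(x)-f(x_0)\|+\|f(x_0)-f(x)\| \ <\ \tfrac{\varepsilon}{3}+\tfrac{\varepsilon}{3}\ =\ \tfrac{2\varepsilon}{3},
\]
because $x,x_0\in K\subseteq X_0$ and $\mathrm{diam}(f(X_0))<\varepsilon/3$. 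Applying Tietze's extension theorem componentwise to the bounded continuous function $g_0-f|_K\colon K\to\mathbb{R}^n$ produces a continuous extension $h\colon X\to\mathbb{R}^n$ with $\|h\|_\infty \leq 2\varepsilon/3 <\varepsilon$ (Tietze's theorem preserves sup-norm bounds). Set $g:=f+h$; then $\|g-f\|_\infty<\varepsilon$ and $g|_K=g_0$, so $B=g_0(K)\subseteq g(X)$.

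Finally, since $\dim_{\text{T}} B = n$ and topological dimension is monotone under inclusion, $\dim_{\text{T}} g(X)\geq n$; combined with $g(X)\subseteq \mathbb{R}^n$ this yields $\dim_{\text{T}} g(X) = n$, so $g\in\mathcal{D}_1\cap B(f,\varepsilon)$. The main conceptual step is extracting the Cantor subset on which one can realise a surjection onto a ball; once this and the bound-preserving Tietze extension are in place, the rest is straightforward.
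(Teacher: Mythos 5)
Your proof is correct and takes essentially the same route as the paper's: locate a Cantor subset of $X$ on which $f$ has small oscillation, map it continuously onto a small ball in $\mathbb{R}^n$ (Hausdorff--Alexandroff), and extend via Tietze, the only cosmetic differences being that you produce the small-oscillation Cantor set by first decomposing $X$ into small compact pieces, and you conclude via monotonicity of $\dim_\text{T}$ rather than via non-empty interior. The one pedantic caveat is that componentwise Tietze preserves the bound $2\varepsilon/3$ exactly only for the max norm on $\mathbb{R}^n$; for the Euclidean norm either shrink the ball radius to $\varepsilon/(3\sqrt{n})$ or compose the extension with the radial retraction onto the closed ball of radius $2\varepsilon/3$ about the origin.
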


\begin{proof}
It suffices to prove that the set $\left\{ f \in C_n(X) : f(X) \text{ has non-empty interior} \right\}$ is dense.  Fix $f \in C_n(X)$ and $\varepsilon>0$.  Also let $K \subseteq X$ be a set homeomorphic to the Cantor space with the property that $\text{diam}\big(f(K_\delta)\big)< \varepsilon/2$, where $K_\delta$ denotes the $\delta$-neighbourhood of $K$.  Note that we can find such a $K$ by \cite[Theorem 11.11]{realanalysis}, mentioned above, and the continuity of $f$.  Now fix $x \in K$ and observe that $f(K_\delta) \subseteq B(f(x) , \varepsilon/2)$. By \cite[Theorem 4.18]{kechris} we may find a continuous surjection $g_0:K \to B(f(x) , \varepsilon/2)$ and by applying Tietze's Extension Theorem to the coordinate functions we can extend $g_0$ to a map $g \in C_n(X)$ such that
\[
g\vert_{K} = g_0 \qquad \text{and} \qquad g\vert_{X\setminus K_\delta} = f.
\]
It is clear that $g(X)$ has non-empty interior and that $\|f-g \|_\infty < \varepsilon$, which completes the proof.
\end{proof}

\begin{lma} \label{dense2}
The set
\[
\mathcal{D}_2 = \left\{ f \in C_n(X)  : \overline{\dim}_\text{\emph{B}} f(X) \leq \min\{n,  \, \dim_\text{\emph{T}} X \} \right\}
\]
is dense.
\end{lma}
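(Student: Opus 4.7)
The plan is to split into two cases based on the relation between $n$ and $\dim_\text{T} X$. If $n \le \dim_\text{T} X$, then $\min\{n,\dim_\text{T} X\} = n$, and since every $g \in C_n(X)$ has $g(X) \subseteq \mathbb{R}^n$ and therefore $\overline{\dim}_\text{B} g(X) \le n$, the set $\mathcal{D}_2$ is all of $C_n(X)$ and density is trivial. So assume $n > \dim_\text{T} X$ and write $t = \dim_\text{T} X$; we must approximate an arbitrary $f \in C_n(X)$ to within $\varepsilon > 0$ by some $g \in C_n(X)$ with $\overline{\dim}_\text{B} g(X) \le t$.

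First I would use uniform continuity of $f$ to select $\delta > 0$ so that $\mathrm{diam}(U) < \delta$ implies $\mathrm{diam}(f(U)) < \varepsilon$. Next, invoking the covering characterization of topological dimension (see Engelking, equivalent to Proposition \ref{partitions}), since $\dim_\text{T} X \le t$ there is a finite open cover $\{U_1,\dots,U_m\}$ of $X$ with each $\mathrm{diam}(U_i) < \delta$ and of order at most $t+1$, i.e.\ no point of $X$ lies in more than $t+1$ of the $U_i$. Pick a partition of unity $\{\varphi_i\}_{i=1}^m$ subordinate to this cover and points $x_i \in U_i$, and define
\[
g(x) = \sum_{i=1}^m \varphi_i(x)\,f(x_i).
\]
Then $g \in C_n(X)$, and for any $x$ the value $g(x)$ is a convex combination of the $f(x_i)$ with $x \in U_i$; since each such $x_i$ satisfies $d(x,x_i) < \delta$, we get $|g(x) - f(x)| < \varepsilon$, hence $\|f - g\|_\infty < \varepsilon$.

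It remains to check the dimension bound. Write $\Phi(x) = (\varphi_1(x),\dots,\varphi_m(x)) \in \mathbb{R}^m$ and let $L \colon \mathbb{R}^m \to \mathbb{R}^n$ be the linear map with $L(e_i) = f(x_i)$, so that $g = L \circ \Phi$. Because the cover has order at most $t+1$, the image $\Phi(X)$ is contained in the nerve of $\{U_i\}$, which is a simplicial complex of dimension at most $t$ sitting inside the standard simplex in $\mathbb{R}^m$; in particular it is a finite union of closed $t$-simplices. Therefore $g(X) \subseteq L(\text{nerve})$ is a finite union of Lipschitz images of $t$-simplices in $\mathbb{R}^n$. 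Each such piece has upper box dimension at most $t$, and finite stability of $\overline{\dim}_\text{B}$ gives $\overline{\dim}_\text{B} g(X) \le t$, as required. The only nontrivial point in this argument is the existence of the open cover of order at most $t+1$, which is the standard covering characterization of topological dimension for separable metric spaces; with this in hand the construction is entirely routine.
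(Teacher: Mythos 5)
Your proof is correct, but it takes a genuinely different route from the paper's. For the nontrivial case $\dim_\text{T} X < n$, the paper invokes the Szpilrajn--Luukkainen theorem: $X$ is homeomorphic to some $X_0$ with $\overline{\dim}_\text{B} X_0 = \dim_\text{T} X$, the induced identification of $C_n(X)$ with $C_n(X_0)$ reduces the problem to showing that Lipschitz maps are dense in $C_n(X_0)$, and Stone--Weierstrass supplies that density. You instead run the classical nerve-map construction from dimension theory: a fine open cover of order at most $t+1$, a subordinate partition of unity, and the barycentric map $g = L \circ \Phi$ into the geometric realization of the nerve. All the steps check out: the order bound forces $\Phi(X)$ into a finite union of $t$-dimensional faces of the standard simplex, and affine images of $t$-simplices together with finite stability of $\overline{\dim}_\text{B}$ give $\overline{\dim}_\text{B}\, g(X) \leq t$, while the uniform-continuity choice of $\delta$ gives $\|f-g\|_\infty < \varepsilon$. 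One small citation quibble: the existence of arbitrarily fine finite open covers of order $\leq t+1$ is the \emph{covering} characterization of $\dim_\text{T}$ (Engelking, Theorem 1.6.12, combined with compactness), which is equivalent to but not the same statement as the partition characterization quoted in Proposition \ref{partitions}; this is standard and not a gap. What your argument buys is self-containedness --- it avoids the nontrivial Szpilrajn/Luukkainen metrization input entirely --- and a slightly stronger conclusion, since $g(X)$ lies in a finite polyhedron of dimension $t$, so in fact $\dim_\text{T} g(X) \leq t$ and $\mathcal{H}^t(g(X)) < \infty$ as well. What the paper's route buys is brevity modulo the cited theorem: once $X$ is remetrized, a single appeal to the density of Lipschitz maps settles the matter for all the dimensions bounded above by $\overline{\dim}_\text{B}$ simultaneously.
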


\begin{proof}
We will assume that $\dim_\text{T} X<n$ as otherwise $\mathcal{D}_2 = C_n(X)$ and we are done.  We will first make use of a classical result in dimension theory which states that for a compact metric space $X$ we have
\begin{equation} \label{szp}
\dim_\text{T} X \ = \  \inf \left\{ \overline{\dim}_\text{B} X_0 : X_0 \text{ is homeomorphic to $X$} \right\}
\end{equation}
and this infimum can always be obtained.  This result was originally proved by Szpilrajn in 1937 \cite{szpil} with upper box dimension replaced by Hausdorff dimension.  Szpilrajn's result has been studied and strengthened by numerous authors over the years with the most general version being obtained by Luukkainen \cite{luk} (see also Charalambous \cite{char} and for multifractal analogues, see Olsen \cite{multiszpil}).  By (\ref{szp}) we have that there is a metric space $X_0$ that is homeomorphic to $X$ via $h\colon X\to X_0$ such that $\overline{\dim}_\text{B} X_0 = \dim_\text{T} X$. Then $H\colon C_n(X_0)\to C_n(X)$ defined by $H(f)=f\circ h$ is a homeomorphism between $C_n(X_0)$ and $C_n(X)$. If $g\in C_n(X_0)$ is a Lipschitz function, then 
\[
\overline{\dim}_\text{B} H(g)(X)=\overline{\dim}_\text{B} g(h(X))=\overline{\dim}_\text{B} g(X_0) \leq \overline{\dim}_\text{B} X_0=\dim_\text{T} X,
\]
because Lipshitz functions do not increase the upper box dimension, see \cite[Exercise 3.1]{falconer}, and so $H(g)\in \mathcal{D}_2$. Thus to prove that $\mathcal{D}_2$ is dense in $C_n(X)$, 
it suffices to show that the Lipschitz functions are dense in $C_n(X_0)$. Let $f \in C_n(X_0)$ be such that $f(x) = (f_1(x) , \dots, f_n(x))$ with each $f_i \in C_1(X)$ and fix $\varepsilon>0$.  By the Stone-Weierstrass Approximation Theorem (see, for example, \cite[Theorem 7.30]{rudin1}) we may choose Lipschitz maps, $g_i$, in $C_1(X_0)$ such that
\[
\sup_{x \in X_0}  \ \max_{i =1, \dots, n} \ \lvert g_i(x) - f_i(x) \rvert \, < \, \frac{\varepsilon}{n}
\]
and it is easy to see that the function $g \in C_n(X)$ defined by $g(x) = (g_1(x) , \dots, g_n(x))$ is both Lipschitz and $\varepsilon$ close to $f$ in $C_n(X_0)$, which completes the proof.
\end{proof}

\begin{comment}
Whence, writing $\Gamma: \times_{i=1}^{n} C_1(X_0) \to C_n(X_0)$ for the natural homeomorphism between $\times_{i=1}^{n} C_1(X_0)$ equipped with the supremum norm and $C_n(X_0)$,
\[
C_n(X_0) \   \supseteq  \  \overline{L_n(X_0)} \  \supseteq \   \overline{\Gamma\left(\times_{i=1}^{n} L_1(X_0)\right)} \  = \   \Gamma\left(\times_{i=1}^{n} \overline{L_1(X_0)}\right) \   = \   \Gamma\left(\times_{i=1}^{n} C_1(X_0)\right) \  = \  C_n(X_0)
\]
and so the Lipschitz maps are also dense in $C_n(X_0)$, which completes the proof.
\end{comment}

Before stating and proving the next lemma we will fix some notation.  For a bounded set $F$ and $\delta>0$, let $N_\delta (F)$ denote the smallest number of open sets required for a $\delta$-cover of $F$ and let $M_\delta(F)$ denote the maximum number of sets possible in a $\delta$-packing of $F$, where a $\delta$-packing is a collection of closed disjoint balls with radius $\delta$ and centres in $F$.  Recall that the lower and upper box dimensions of a set $F \subseteq X$ are defined by
\[
\underline{\dim}_\text{B} F = \liminf_{\delta \to 0} \, \frac{\log N_\delta (F)}{-\log \delta}
\qquad
\text{and}
\qquad
\overline{\dim}_\text{B} F = \limsup_{\delta \to 0} \,  \frac{\log N_\delta (F)}{-\log \delta},
\]
respectively, and an equivalent definition is obtained if we replace $N_\delta(F)$ by $M_\delta(F)$.  Let $(\mathcal{K}(X),d_\mathcal{H})$ be the set of non-empty compact subsets of $X$ endowed with the \emph{Hausdorff metric}, that is, $d_\mathcal{H}(K_1,K_2)=\inf\{\delta: K_1\subseteq (K_2)_\delta,~ K_2\subseteq (K_1)_\delta\}$, where $(K)_\delta$ denotes the $\delta$-neighbourhood of a set $K$. The following semicontinuity properties are fundamental and have been noted before, see for example \cite[Lemma 3.1 and Lemma 4.1]{mattilamauldin}, but we include the simple proofs for completeness.

\begin{lma} \label{semicontinuity}
Let $\delta >0$.  The map $N_\delta \colon \mathcal{K}(X) \to \mathbb{R}$ is upper semicontinuous and the map $M_\delta \colon \mathcal{K}(X) \to \mathbb{R}$ is lower semicontinuous.
\end{lma}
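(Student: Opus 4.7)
The plan is to prove the two semicontinuity claims separately, exploiting in each case the compactness of $K$ and of the ambient space $X$.

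For the upper semicontinuity of $N_\delta$, I would fix $K \in \mathcal{K}(X)$ and choose an optimal cover $\{U_1, \dots, U_N\}$ of $K$ by open sets of diameter at most $\delta$, where $N = N_\delta(K)$. Since $V := \bigcup_{i=1}^N U_i$ is open and contains the compact set $K$, there exists $\eta > 0$ such that the open $\eta$-neighbourhood $(K)_\eta$ is contained in $V$. Any $K' \in \mathcal{K}(X)$ with $d_\mathcal{H}(K, K') < \eta$ then satisfies $K' \subseteq (K)_\eta \subseteq V$, so the same $U_i$ form a $\delta$-cover of $K'$ and hence $N_\delta(K') \leq N_\delta(K)$, which is exactly upper semicontinuity at $K$.

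For the lower semicontinuity of $M_\delta$ I would argue by contradiction. Fix $K \in \mathcal{K}(X)$, let $M := M_\delta(K)$, and choose centres $x_1, \dots, x_M \in K$ so that the closed balls $\bar{B}(x_i, \delta)$ are pairwise disjoint. Suppose there is a sequence $K_n \to K$ in $\mathcal{K}(X)$ with $M_\delta(K_n) < M$ for every $n$. Since $x_i \in K \subseteq (K_n)_{d_\mathcal{H}(K, K_n) + 1/n}$, for each $i$ I can pick $x_i^n \in K_n$ with $x_i^n \to x_i$. The assumption $M_\delta(K_n) < M$ forces some pair of balls $\bar{B}(x_{i_n}^n, \delta)$ and $\bar{B}(x_{j_n}^n, \delta)$ with $i_n \neq j_n$ to meet at a point $y_n$. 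Pigeonholing on the finitely many index pairs $(i_n, j_n)$ and using compactness of $X$ to extract a convergent subsequence $y_n \to y$, the inequalities $d(y_n, x_i^n) \leq \delta$ and $d(y_n, x_j^n) \leq \delta$ pass to the limit and place $y$ in $\bar{B}(x_i, \delta) \cap \bar{B}(x_j, \delta)$, contradicting the original disjointness.

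The one delicate point is that in an arbitrary metric space, disjointness of closed balls is a purely qualitative property that need not give any quantitative separation of the centres. My compactness argument for the lower semicontinuity sidesteps this by extracting limit witnesses rather than perturbing the centres directly; a naive triangle-inequality estimate would require an additional geodesic or doubling hypothesis on $X$ that is not available in this generality.
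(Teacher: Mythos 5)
Your proof is correct and is in substance the paper's argument: the paper simply asserts that $\{K \in \mathcal{K}(X) : N_\delta(K) < t\}$ and $\{K \in \mathcal{K}(X) : M_\delta(K) > t\}$ are open because the covering sets are open and the packing balls are closed, and your two paragraphs supply exactly the compactness details behind those assertions. In particular, your subsequence-extraction argument for $M_\delta$ is the right way to handle the point you flag yourself --- that disjointness of closed balls need not survive a naive perturbation of the centres in a general metric space --- a subtlety the paper's one-line proof leaves entirely implicit.
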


\begin{proof}
Fix $\delta >0$.  Since the sets in $\mathcal{K}(X)$ are closed and the covering sets are open, it is clear that
\[
N_\delta^{-1}\left((-\infty, t)\right) = \{ K \in \mathcal{K}(X) : N_\delta(K) < t\}
\]
is open for all $t \in \mathbb{R}$ and so $N_\delta$ is upper semicontinuous.  Similarly, since the sets in $\mathcal{K}(X)$ are closed and the sets used in the packings are closed, it is clear that
\[
M_\delta^{-1}\left((t,\infty)\right) = \{ K \in \mathcal{K}(X) : M_\delta(K) > t\}
\]
is open for all $t \in \mathbb{R}$ and so $M_\delta$ is lower semicontinuous.
\end{proof}

\begin{lma} \label{res1}
The set
\[
\mathcal{R}_1= \left\{ f \in C_n(X)  :   \underline{\dim}_\text{\emph{B}} f(X) \leq \min\{n,  \, \dim_\text{\emph{T}} X \} \right\}
\]
is residual.
\end{lma}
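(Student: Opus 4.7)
The plan is to verify that $\mathcal{R}_1$ is both dense and $G_\delta$ in $C_n(X)$, so that the Baire category theorem yields residuality. Throughout, write $t = \min\{n,\, \dim_\text{T} X\}$.

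Density will be immediate from earlier work: since $\underline{\dim}_\text{B} F \leq \overline{\dim}_\text{B} F$ for every bounded $F$, the set $\mathcal{D}_2$ of Lemma \ref{dense2} is contained in $\mathcal{R}_1$, and Lemma \ref{dense2} already established that $\mathcal{D}_2$ is dense. So I would record this inclusion and move on.

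For the $G_\delta$ property, I would start from the standard $\liminf$ characterisation of lower box dimension: $\underline{\dim}_\text{B} f(X) \leq t$ holds if and only if for every $k, m \in \mathbb{N}$ there exists $\delta \in (0, 1/m)$ with $N_\delta(f(X)) < \delta^{-(t+1/k)}$. This gives the representation
\[
\mathcal{R}_1 \ = \ \bigcap_{k \in \mathbb{N}} \bigcap_{m \in \mathbb{N}} \bigcup_{\delta \in (0, 1/m)} \left\{ f \in C_n(X) : N_\delta(f(X)) < \delta^{-(t+1/k)} \right\}.
\]
I would then observe that $f \mapsto f(X)$ is $1$-Lipschitz from $(C_n(X), \|\cdot\|_\infty)$ into $(\mathcal{K}(\mathbb{R}^n), d_\mathcal{H})$, because $\|f-g\|_\infty \leq \varepsilon$ forces both $f(X) \subseteq (g(X))_\varepsilon$ and $g(X) \subseteq (f(X))_\varepsilon$. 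Composing this continuous map with the upper semicontinuous $N_\delta$ from Lemma \ref{semicontinuity} shows that $f \mapsto N_\delta(f(X))$ is upper semicontinuous for each fixed $\delta > 0$. Consequently each innermost set $\{f : N_\delta(f(X)) < \delta^{-(t+1/k)}\}$ is open in $C_n(X)$, each union over $\delta \in (0, 1/m)$ is open, and $\mathcal{R}_1$ is a countable intersection of open sets.

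The genuine substance — producing dense approximants whose image has small upper box dimension — has already been handled in Lemma \ref{dense2} via the Szpilrajn-type identity (\ref{szp}) and Stone--Weierstrass approximation by Lipschitz maps, while the semicontinuity needed to open up the defining sets is exactly the content of Lemma \ref{semicontinuity}. Given these two ingredients, the proof is essentially a packaging exercise, and I do not anticipate a real obstacle beyond correctly stating the $\liminf$ characterisation.
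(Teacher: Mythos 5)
Your proposal is correct and follows essentially the same route as the paper: density is obtained from the inclusion $\mathcal{D}_2 \subseteq \mathcal{R}_1$ and Lemma \ref{dense2}, while the $\mathcal{G}_\delta$ property comes from writing $\mathcal{R}_1$ as a countable intersection of unions of sets of the form $\Lambda^{-1} N_\delta^{-1}\left(\left(-\infty, \delta^{-t-\varepsilon}\right)\right)$, which are open by continuity of $f \mapsto f(X)$ and the upper semicontinuity of $N_\delta$ from Lemma \ref{semicontinuity}. The only cosmetic difference is that you index the approximation by two parameters $k,m$ where the paper couples them into a single $m$; both representations of the $\liminf$ condition are valid.
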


\begin{proof}
If $n \leq  \dim_\text{T} X$, then $\mathcal{R}_1 = C_n(X)$ and we are done, so we may assume that $\dim_\text{T} X < n$ and write $t=\min\{n,  \, \dim_\text{T} X \} = \dim_\text{T} X$.  We will prove that $\mathcal{R}_1$ is a dense $\mathcal{G}_\delta$ subset of $C_n(X)$.
\\ \\
(i) $\mathcal{R}_1$ is $\mathcal{G}_\delta$.  Let $\Lambda \colon C_n(X) \to \mathcal{K}(\mathbb{R}^n)$ be defined by $\Lambda(f) = f(X)$ and observe that it is continuous.  We have
\begin{eqnarray*}
\mathcal{R}_1 &=&  \bigcap_{m=1}^{\infty} \ \bigcup_{\delta \in (0, 1/m)} \left\{f \in C_n(X)  : \frac{\log N_\delta(f(X))}{-\log \delta} < t+\tfrac{1}{m} \right\} \\ \\
&=& \bigcap_{m=1}^{\infty}  \bigcup_{\delta \in (0, 1/m)} \left\{  f \in C_n(X)  : N_\delta(f(X)) < \delta^{-t-1/m} \right\} \\ \\
&=& \bigcap_{m=1}^{\infty}  \bigcup_{\delta \in (0, 1/m)} \Lambda^{-1} \,  N_\delta^{-1} \, \left( \left(-\infty, \,  \delta^{-t-1/m}\right) \right).
\end{eqnarray*}
The set $\Lambda^{-1} \,  N_\delta^{-1} \, \left( \left(-\infty, \,  \delta^{-t-1/m}\right) \right)$ is open by the continuity of $\Lambda$ and the upper semicontinuity of $N_\delta$, see Lemma \ref{semicontinuity}.  It follows that $\mathcal{R}_1$ is a $\mathcal{G}_\delta$ subset of $C_n(X)$.
\\ \\
(ii) $\mathcal{R}_1$ is dense.  This follows immediately since $\mathcal{R}_1 \supseteq \mathcal{D}_2$ and $\mathcal{D}_2$ is dense by Lemma \ref{dense2}.
\end{proof}

Our next goal is to prove that the typical packing dimension is as large as possible, $n$.  However, due to the extra step in the definition of packing measure, packing dimension is often more difficult to work with than Hausdorff dimension.  As such we will first prove an auxiliary result concerning upper box dimension and then deduce the required result for packing dimension.

\begin{lma} \label{res2}
The set
\[
\mathcal{R}_2= \left\{ f \in C_n(X)  :   \overline{\dim}_\text{\emph{B}} f(X)  = n \right\}
\]
is residual.
\end{lma}

\begin{proof}
We will show that $\mathcal{R}_2$ is a dense $\mathcal{G}_\delta$ subset of $C_n(X)$.
\\ \\
(i) $\mathcal{R}_2$ is $\mathcal{G}_\delta$.  Let $\Lambda$ be defined as above.  We have
\begin{eqnarray*}
\mathcal{R}_2 &=& \bigcap_{m=1}^{\infty}  \ \bigcup_{\delta \in (0, 1/m)} \left\{f \in C_n(X)  : \frac{\log M_\delta(f(X))}{-\log \delta} > n-\tfrac{1}{m} \right\} \\ \\
&=& \bigcap_{m=1}^{\infty}  \bigcup_{\delta \in (0, 1/m)} \left\{  f \in C_n(X)  : M_\delta(f(X)) > \delta^{-n+1/m} \right\} \\ \\
&=& \bigcap_{m=1}^{\infty}  \bigcup_{\delta \in (0, 1/m)} \Lambda^{-1} \,  M_\delta^{-1} \, \left( \left( \delta^{-n+1/m}, \, \infty \right) \right).
\end{eqnarray*}
The set $\Lambda^{-1} \,M_\delta^{-1} \, \left( \left( \delta^{-n+1/m}, \, \infty \right) \right)$ is open by the continuity of $\Lambda$ and the lower semicontinuity of $M_\delta$, see Lemma \ref{semicontinuity}.  It follows that $\mathcal{R}_2$ is a $\mathcal{G}_\delta$ subset of $C_n(X)$.
\\ \\
(ii) $\mathcal{R}_2$ is dense.  This follows immediately since $\mathcal{R}_2 \supseteq \mathcal{D}_1$ and $\mathcal{D}_1$ is dense by Lemma \ref{dense1}.
\end{proof}

Before showing that the required result for packing dimension follows from the above lemma, we state a well-known technical lemma.  We give its simple proof for completeness.

\begin{lma} \label{extendingtypical}
Let $X,Y$ be complete metric spaces and let $P \colon X\to Y$ be a continuous open map. If $A \subseteq Y$ is residual then $P^{-1}(A)\subseteq X$ is also residual.
\end{lma}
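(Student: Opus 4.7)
The plan is to unpack the definition of residual via the Baire Category Theorem (Theorem \ref{BCThm}), pull a witnessing dense $\mathcal{G}_\delta$ subset of $A$ back through $P$, and verify that each pullback factor is open (from continuity of $P$) and dense (from openness of $P$).

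First I would use Theorem \ref{BCThm} to choose open dense subsets $U_1, U_2, \dots \subseteq Y$ with $\bigcap_{n=1}^\infty U_n \subseteq A$. Then
\[
P^{-1}(A) \ \supseteq \ P^{-1}\Bigl(\bigcap_{n=1}^\infty U_n\Bigr) \ = \ \bigcap_{n=1}^\infty P^{-1}(U_n),
\]
so it suffices to show that each $P^{-1}(U_n)$ is open and dense in $X$; residuality of $P^{-1}(A)$ then follows from Theorem \ref{BCThm} applied in $X$. Openness of $P^{-1}(U_n)$ is immediate from continuity of $P$.

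For density, let $V \subseteq X$ be a non-empty open set. Since $P$ is an open map, $P(V)$ is a non-empty open subset of $Y$, and since $U_n$ is dense in $Y$ we have $P(V) \cap U_n \neq \emptyset$. Picking any $y$ in this intersection and any $x \in V$ with $P(x)=y$ gives $x \in V \cap P^{-1}(U_n)$, so $P^{-1}(U_n)$ meets every non-empty open set in $X$, as required.

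There is no real obstacle here; the only thing to be careful of is to use the openness of $P$ (rather than just surjectivity) in the density step, since this is precisely what allows one to convert a non-empty open set $V \subseteq X$ into a non-empty open target set $P(V) \subseteq Y$ on which the density of $U_n$ can be invoked.
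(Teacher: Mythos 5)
Your proof is correct and follows essentially the same route as the paper's: the paper reduces to the case where $A$ itself is a dense $\mathcal{G}_\delta$ set, notes that $P^{-1}(A)$ is $\mathcal{G}_\delta$ by continuity, and proves its density by exactly your argument (openness of $P$ turns a non-empty open $V \subseteq X$ into a non-empty open $P(V) \subseteq Y$ meeting $A$). Working layer-by-layer with the open dense sets $U_n$ rather than with $A$ as a whole is only a cosmetic difference.
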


\begin{proof}
We may assume that $A$ is a dense $\mathcal{G}_\delta$ set in $Y$. The continuity of $P$ implies that $P^{-1}(A)$ is also $\mathcal{G}_\delta$, thus it is enough to prove that $P^{-1}(A)$
is dense in $X$. Let $U \subseteq X$ be a non-empty open set.  It follows that $P(U) \subseteq Y$ is also non-empty and open and hence $P(U) \cap A \neq \emptyset$ and so $U \cap P^{-1}(A) \neq \emptyset$.
Thus $P^{-1}(A)$ is dense in $X$.
\end{proof}

%see \cite[Lemma 5.1]{bairefunctions}.
%\begin{lma}[Hyde et al] \label{extendingtypical}
%Let $X$ and $Y$ be metric space and let $\Phi: X \to Y$ be a map with the proerty that
%\[
%\Phi\left(B(x,r)\right) = B\left(\Phi(x),r\right)
%\]
%for all $x \in X$ and $r>0$.  Then, if $\mathcal{R}$ is a residual subset of $Y$, then $\Phi^{-1}(\mathcal{R})$ is a residual subset of $X$.
%\end{lma}
%\begin{proof}
%This is proved in \cite[Lemma 5.1]{bairefunctions}.
%\end{proof}

\begin{lma} \label{res3}
The set
\[
\mathcal{R}_3 = \left\{ f \in C_n(X)  :   \dim_\text{\emph{P}} f(X)  = n \right\}
\]
is residual.
\end{lma}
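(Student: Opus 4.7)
The plan is to bootstrap Lemma \ref{res2} (typical upper box dimension equal to $n$) to a packing dimension statement, by combining a countable intersection of residual sets with a Baire category argument on the domain.

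First, since every uncountable compact metric space contains a subset homeomorphic to the Cantor space (cited earlier in the paper), fix such a $K \subseteq X$. As $f(K) \subseteq f(X) \subseteq \mathbb{R}^n$ gives $\dim_\text{P} f(K) \leq \dim_\text{P} f(X) \leq n$, it suffices to prove that $\{f \in C_n(X) : \dim_\text{P} f(K) = n\}$ is residual. The restriction map $R \colon C_n(X) \to C_n(K)$, $f \mapsto f|_K$, is continuous and linear, surjective by the coordinatewise application of Tietze's extension theorem, and hence open by the open mapping theorem. Lemma \ref{extendingtypical} then reduces the problem to showing that $\mathcal{R}_3' = \{g \in C_n(K) : \dim_\text{P} g(K) = n\}$ is residual in $C_n(K)$.

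Second, inside $K$ I will fix a countable basis $\{U_i\}_{i \in \mathbb{N}}$ of clopen subsets, each itself homeomorphic to the Cantor space. For every $i$, Lemma \ref{res2} applied to $U_i$ in place of $K$ produces a residual set in $C_n(U_i)$ on which $\overline{\dim}_\text{B} h(U_i) = n$. Pulling this back along the continuous open restriction $C_n(K) \to C_n(U_i)$ and applying Lemma \ref{extendingtypical} yields a residual set $\mathcal{S}_i \subseteq C_n(K)$ on which $\overline{\dim}_\text{B} g(U_i) = n$, and the countable intersection $\mathcal{S} = \bigcap_i \mathcal{S}_i$ is residual.

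Third, I will verify the inclusion $\mathcal{S} \subseteq \mathcal{R}_3'$. Fix $g \in \mathcal{S}$ and suppose, for contradiction, that $\dim_\text{P} g(K) < n$. Using the characterisation of packing dimension as modified upper box-counting dimension, write $g(K) \subseteq \bigcup_{j=1}^{\infty} F_j$ with $\overline{\dim}_\text{B} F_j < n$ for all $j$; replacing each $F_j$ by its closure we may assume the $F_j$ are closed. Then $K = \bigcup_j g^{-1}(F_j)$ expresses $K$ as a countable union of closed sets, so the Baire category theorem supplies some $j_0$ for which $g^{-1}(F_{j_0})$ has non-empty interior. This interior contains some basis element $U_{i_0}$, whence $g(U_{i_0}) \subseteq F_{j_0}$ and $n = \overline{\dim}_\text{B} g(U_{i_0}) \leq \overline{\dim}_\text{B} F_{j_0} < n$, a contradiction.

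The main obstacle is precisely the gap between upper box dimension of a single image (which is what Lemma \ref{res2} provides) and packing dimension of the image. The device for closing it is to demand the Lemma \ref{res2} conclusion simultaneously for a countable basis of Cantor subpieces of $K$: any hypothetical small packing-dimension cover of $g(K)$ is forced by Baire category to trap one of these subpieces inside a single cover piece of sub-$n$ box dimension, contradicting what we arranged on $\mathcal{S}$.
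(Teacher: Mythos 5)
Your proposal is correct and follows essentially the same route as the paper: reduce to a Cantor subset $K\subseteq X$, apply Lemma \ref{res2} to a countable family of Cantor subpieces of $K$, pull residuality back through continuous open restriction maps via Lemma \ref{extendingtypical}, and then upgrade upper box dimension to packing dimension by a local-to-global argument. The only (cosmetic) difference is that the paper invokes the standard criterion that a compact set whose upper box dimension is locally constant has equal packing and upper box dimension (\cite[Corollary 3.9]{falconer}), applying Baire category to the image, whereas you re-derive the same fact by hand, applying Baire category to the domain $K$ and a clopen basis of Cantor subsets.
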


\begin{proof}
Since $X$ is uncountable and compact it follows that $X$ contains a closed subset, $K$, homeomorphic to the Cantor space, see \cite[Theorem 11.11]{realanalysis}.  Let $\mathcal{A}(K)$ be a countable family of subsets of $K$ each of which is homeomorphic to $K$ and such that every ball centered in $K$ with positive radius contains a member of $\mathcal{A}(K)$. It is a well-known result in dimension theory that if $F\subseteq \mathbb{R}^n$ is compact and such that $\overline{\dim}_\text{B} (F \cap V) = \overline{\dim}_\text{B} F$ for all open sets $V$ that intersect $F$, then $\dim_\text{P} F =  \overline{\dim}_\text{B} F$, see \cite[Corollary 3.9]{falconer}.  This result together with the fact that each $f \in C_n(X)$ is continuous gives
\begin{eqnarray*}
\mathcal{R}_3 &=&  \left\{ f \in C_n(X)  :   \dim_\text{P} f(X)   \geq n \right\}\\ \\
&\supseteq& \bigcap_{K_0 \in \mathcal{A}(K)} \left\{ f \in C_n(X)  :   \overline{\dim}_\text{B} f(K_0)   \geq n \right\}.
\end{eqnarray*}
Lemma \ref{res2} implies that, for all $K_0 \in \mathcal{A}(K)$, the set $\left\{ f \in C_n(K_0)  :   \overline{\dim}_\text{B} f(K_0) \geq n \right\}$ is a residual subset of $C_n(K_0)$.  Now for each $K_0 \in \mathcal{A}(K)$ define a map $P_{K_0} \colon C_n(X) \to C_n(K_0)$ by $P_{K_0}(f)= f \vert_{K_0}$.
Clearly, $P_{K_0}$ is continuous and if $g_0 \in C_n(K_0)$ is $\varepsilon$ close to $P_{K_0}(f)$ then it has an extension $g \in C_n(X)$ by Tietze's Extension Theorem such that $g$ is $\varepsilon$ close to $f$. Thus $P_{K_0}$ is open.  Hence, it follows from Lemma \ref{extendingtypical} that for all $K_0 \in \mathcal{A}(K)$, the set
\[
\left\{ f \in C_n(X)  :   \overline{\dim}_\text{B} f(K_0) \geq n \right\} \ = \ P_{K_0}^{-1} \left( \left\{ f \in C_n(K_0)  :   \overline{\dim}_\text{B} f(K_0) \geq n \right\} \right)
\]
is a residual subset of $C_n(X)$ which proves that $\mathcal{R}_3$ is residual.
\end{proof}

%In Lemma 3.9 if g_0\in C_n(K_0) is close to P_{K_0}(f) then it has an extension g\in C_n(X) by TET such that g is close to f. Thus P_{K_0} is open. 5

\subsection{Proof of Theorem \ref{hausdecomp}} \label{typproofs1}

The lemmas in Subsection \ref{keytypproofs} combine easily to prove Theorem \ref{hausdecomp}.
\begin{proof}
Let $\dim$ denote $\dim_\text{T}, \, \dim_\text{H}$ or $\underline{\dim}_\text{B}$.  We have
\begin{itemize}
\item[(1)] $\left\{ f \in C_n(X)  : 0 \leq \dim f(X) <  \min\{n,  \, \dim_\text{T} X\} \right\} \ \subseteq  \ \mathcal{N}_1$ and so is nowhere dense by Lemma \ref{nwd1};
\item[(2)] $\left\{ f \in C_n(X)  :  \dim f(X) = \min\{n,  \, \dim_\text{T} X \}\right\} \  \supseteq  \ \mathcal{R}_1 \setminus \mathcal{N}_1$ and so is residual by Lemma \ref{res1} and  \ref{nwd1};
\item[(3)] $\left\{ f \in C_n(X)  :  \min\{n,  \, \dim_\text{T} X \}< \dim f(X) \leq n \right\} \ \subseteq \ C_n(X) \setminus \mathcal{R}_1$ and so is meager by Lemma \ref{res1};
\item[(4)] Assuming $\dim_\text{T} X < n$, we have $\big\{ f \in C_n(X)  :  \min\{n,  \, \dim_\text{T} X \}< \dim f(X) \leq n \big\} \ \supseteq \ \mathcal{D}_1$ which is dense by Lemma \ref{dense1};
\end{itemize}
which completes the proof.
\end{proof}

\subsection{Proof of Theorem \ref{packdecomp}} \label{typproofs2}

The lemmas in Subsection \ref{keytypproofs} combine easily to prove Theorem \ref{packdecomp}.
\begin{proof}
Let $\text{Dim}$ denote $\dim_\text{P}$ or $\overline{\dim}_\text{B}$.  We have
\begin{itemize}
\item[(1)] $\left\{ f \in C_n(X)  : 0 \leq \text{Dim} f(X) < \min\{n,  \, \dim_\text{T} X\} \right\} \ \subseteq  \ \mathcal{N}_1$ and so is nowhere dense by Lemma \ref{nwd1};
\item[(2)] $\left\{ f \in C_n(X)  :  \min\{n,  \, \dim_\text{T} X \} \leq \text{Dim} f(X) < n \right\} \  \subseteq  \ C_n(X) \setminus \mathcal{R}_3$ and so is meager by Lemma \ref{res3};
\item[(3)] Assuming $\dim_\text{T} X < n$, we have $\left\{f \in C_n(X)  :  \min\{n,  \, \dim_\text{T} X \} \leq \text{Dim} f(X) < n \right\} \ \supseteq \ \mathcal{D}_2 \setminus \mathcal{N}_1$ which is dense by Lemma \ref{dense2}, Lemma \ref{nwd1} and the fact that the set difference of a dense set and a nowhere dense set is dense;
\item[(4)] $\left\{ f \in C_n(X)  :  \text{Dim} f(X) = n \right\} \ \supseteq \ \mathcal{R}_3$ and so is residual by Lemma \ref{res3};
\end{itemize}
which completes the proof.
\end{proof}

\subsection{Proof of Theorem \ref{measures}} \label{typproofs3}

Theorem \ref{measures} (1) follows from the following lemma and the fact that $\mathcal{H}^n=\mathcal{P}^n$ for the Borel subsets of $\mathbb{R}^n$ (see \cite[Theorem 6.12]{mattila} and apply the Lebesgue Density Theorem).

\begin{lma} \label{measurelem1}
Suppose $n\leq \dim_\text{\emph{T}} X$.  For a typical $f \in C_n(X)$, we have
\[
\mathcal{H}^n( f(X)) >0
\]
and for all $f \in C_n(X)$, we have
\[
\mathcal{P}^n( f(X)) < \infty.
\]
\end{lma}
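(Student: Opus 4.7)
The finiteness half of the statement requires no typicality. For every $f \in C_n(X)$, the image $f(X)$ is compact, hence bounded, in $\mathbb{R}^n$. Since $\mathcal{P}^n$ coincides with $\mathcal{H}^n$ on Borel subsets of $\mathbb{R}^n$ by \cite[Theorem 6.12]{mattila}, and both are comparable to $n$-dimensional Lebesgue measure on $\mathbb{R}^n$, we immediately obtain $\mathcal{P}^n(f(X)) < \infty$. The hypothesis $n \leq \dim_\text{T} X$ plays no role here.

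For the positivity half, my plan is to leverage Theorem \ref{typmain}, which is already at our disposal at this point in the paper. Since $n \leq \dim_\text{T} X$ forces $\min\{n, \dim_\text{T} X\} = n$, Theorem \ref{typmain} immediately gives that $\dim_\text{T} f(X) = n$ for a typical $f \in C_n(X)$. I will then invoke the classical characterisation---due to Hurewicz and Wallman \cite{hurwal}---that a subset $E \subseteq \mathbb{R}^n$ satisfies $\dim_\text{T} E = n$ precisely when $E$ has nonempty interior in $\mathbb{R}^n$. Applied to $f(X)$, this shows that typically $f(X)$ contains an open ball of $\mathbb{R}^n$, and therefore $\mathcal{H}^n(f(X)) > 0$.

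No genuine obstacle arises in this route. The real work for the positivity assertion has already been absorbed into Theorem \ref{typmain} through the sequence of lemmas in Subsection \ref{keytypproofs}; the only extra input is the Hurewicz-Wallman characterisation of $n$-dimensional subsets of Euclidean space, which is a standard dimension-theoretic fact also underlying the discussion of stable values in the introduction. If one preferred a more self-contained route that avoids quoting Theorem \ref{typmain}, an alternative would be to show directly that the set $\mathcal{S} = \{f \in C_n(X) : f \text{ has a stable value}\}$ is open and dense: openness is immediate from the definition, and density can be proved by the same decomposition as in Lemma \ref{nwd1}, applying Hurewicz-Wallman on a piece $X_j$ with $\dim_\text{T} X_j = \dim_\text{T} X \geq n$ and gluing to $f$ via Tietze's extension theorem after an affine rescaling in the target; any $f \in \mathcal{S}$ with stable value $y$ of parameter $\varepsilon$ satisfies $B(y, \varepsilon/2) \subseteq f(X)$, hence $\mathcal{H}^n(f(X)) > 0$.
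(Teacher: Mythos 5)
Your proposal is correct and follows essentially the same route as the paper: finiteness holds for every $f$ because $f(X)$ is bounded and $\mathcal{P}^n$ is a constant multiple of Lebesgue measure on $\mathbb{R}^n$, and positivity is deduced from Theorem \ref{typmain}, which gives $\dim_\text{T} f(X)=n$ typically. The only (immaterial) difference is the final conversion step: the paper invokes Szpilrajn's theorem that $\mathcal{H}^{\dim_\text{T} E}(E)>0$ for any metric space $E$, whereas you use the Hurewicz--Wallman characterisation that an $n$-dimensional subset of $\mathbb{R}^n$ has nonempty interior --- both are one-line citations from \cite{hurwal} and yours in fact yields the slightly stronger conclusion that $f(X)$ typically contains a ball.
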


\begin{proof}
The fact that $\mathcal{H}^{n}( f(X)) >0$ follows from the work of Szpilrajn \cite{szpil}, where it is shown that $\mathcal{H}^{\dim_\text{T} X}(X) >0$ for any metric space $X$; also see \cite[Theorem VII 3.]{hurwal}.  Since Theorem \ref{typmain} gives that for a typical $f \in C_n(X)$, we have $\dim_\text{H} f(X)  =  \dim_\text{T} f(X) = n$, the result follows.
\\ \\
The fact that $\mathcal{P}^n( f(X)) < \infty$ for all $f \in C_n(X)$ follows immediately from the fact that $f(X)$ is a bounded subset of $\mathbb{R}^n$ and the fact that $n$-dimensional packing measure is a constant multiple of $n$-dimensional Lebesgue measure on $\mathbb{R}^n$, see \cite[Proposition 1.4.5]{cohn}.
\end{proof}

From now on we will be concerned with the case where $\dim_\text{T} X < n$.  Theorem \ref{measures} (2) follows from the two subsequent lemmas and Theorem \ref{typmain}.

\begin{lma} \label{measurelem2}
If $\dim_\text{\emph{T}} X < n$, then the set
\[
\left\{ f \in C_n(X)  :  \mathcal{H}^{\dim_\text{\emph{T}} X}\vert_{f(X)} \text{\emph{ is not $\sigma$-finite}} \right\}
\]
is residual.
\end{lma}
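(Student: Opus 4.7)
The plan is to show that the complementary set
$\mathcal{M} = \{f \in C_n(X) : \mathcal{H}^t|_{f(X)} \text{ is $\sigma$-finite}\}$
is meagre in $C_n(X)$, where $t = \dim_\text{T} X$.

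The first step is a reduction to the case $n = t+1$. Let $\pi \colon \mathbb{R}^n \to \mathbb{R}^{t+1}$ denote orthogonal projection onto the first $t+1$ coordinates, which is $1$-Lipschitz. Given a decomposition $f(X) = \bigcup_i A_i$ with $\mathcal{H}^t(A_i) < \infty$, the push-forward $(\pi\circ f)(X) = \bigcup_i \pi(A_i)$ satisfies $\mathcal{H}^t(\pi(A_i)) \leq \mathcal{H}^t(A_i) < \infty$, so $\sigma$-finiteness of $\mathcal{H}^t|_{f(X)}$ propagates to $\mathcal{H}^t|_{(\pi\circ f)(X)}$; contrapositively, non-$\sigma$-finiteness of the latter forces that of the former. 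The induced map $P \colon C_n(X) \to C_{t+1}(X)$, $P(f) = \pi \circ f$, is continuous and open, since for any $g \in B(P(f), \varepsilon)$ the function $(g_1, \ldots, g_{t+1}, f_{t+2}, \ldots, f_n)$ lies in $B(f, \varepsilon)$ and maps to $g$ under $P$. Lemma \ref{extendingtypical} then allows one to assume $n = t+1$.

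For $t = 0$, the statement is immediate, since $\mathcal{H}^0$ is counting measure and hence $\sigma$-finite on $f(X)$ if and only if $f(X)$ is countable; Lemma \ref{res3} gives $\dim_\text{P} f(X) = 1$ typically, so $f(X)$ is uncountable. For $t \geq 1$ the plan is to exhibit a dense $\mathcal{G}_\delta$ set of $f$ with $\mathcal{H}^t|_{f(X)}$ not $\sigma$-finite, mimicking the template of Lemmas \ref{res2}--\ref{res3}. Since $\dim_\text{T} X \geq t$, the Alexandroff / Hurewicz--Wallman essential-map theorem provides an essential map onto $B^t$. Viewing such an essential map as the first $t$ coordinates of a map in $C_{t+1}(X)$ and inserting rescaled essential copies into small disjoint pieces of $X$ via Tietze extensions produces, in every open ball of $C_{t+1}(X)$, functions whose image stably contains a Cantor-type family of disjoint essential $t$-balls. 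Arranged across a fine enough countable scheme of scales and locations, these stable pieces should force $f(X)$ to contain a Borel subset on which $\mathcal{H}^t$ is not $\sigma$-finite.

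I expect the main obstacle to be the $\mathcal{G}_\delta$ presentation itself. Unlike in Lemmas \ref{res1}--\ref{res3}, neither $\mathcal{H}^t$ nor its $\delta$-content $\mathcal{H}^t_\delta$ is lower-semicontinuous under Hausdorff convergence of compacta in $\mathbb{R}^{t+1}$, so openness of the approximating sets cannot be deduced from semicontinuity of the image map $\Lambda \colon f \mapsto f(X)$; the natural substitute is \emph{stability of essential values}, which is genuinely an open condition in the supremum norm. Moreover, non-$\sigma$-finiteness is a universal statement over all countable Borel decompositions of $f(X)$ and is not automatically equivalent to a countable conjunction of local conditions (compare $\mathbb{R}^n$ with $\mathcal{H}^n$: locally infinite yet $\sigma$-finite). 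Bridging this gap requires identifying a combinatorial ``canonical witness'' inside $f(X)$ — morally, an uncountable transversal family of disjoint positive-$\mathcal{H}^t$-measure pieces which no countable Borel cover can simultaneously resolve — and verifying that its existence is captured by a countable intersection of open conditions via essential-map stability.
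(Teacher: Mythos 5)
Your reduction to $C_{t+1}(X)$ via the coordinate projection and Lemma \ref{extendingtypical} is sound (the paper performs an analogous reduction, projecting down to $C_t(X)$), and your $t=0$ case is correct. But for $t\geq 1$ the proposal stops exactly where the real work begins: you acknowledge that you do not know how to capture non-$\sigma$-finiteness by a countable intersection of open conditions, and the mechanism you suggest --- stably embedding a Cantor-type family of disjoint essential $t$-balls into $f(X)\subseteq\mathbb{R}^{t+1}$ --- cannot work as stated. Containing a flat $t$-dimensional ball inside $\mathbb{R}^{t+1}$ is not a stable property: stable values in the Alexandroff/Hurewicz--Wallman sense give an interior point of the image in the \emph{ambient} dimension, and an arbitrarily small perturbation of $f$ destroys any prescribed $t$-dimensional slice of the image. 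So there is no open condition of the kind you are hoping for, and the ``canonical witness'' your plan requires is left entirely unconstructed.

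The paper closes this gap with a combination of ingredients none of which appear in your sketch. First, a theorem of Kato gives that for a typical $g\in C_t(X)$ (recall $\dim_{\mathrm{T}}X=t$) there is a set $S_g\subseteq g(X)$ with $\mathcal{H}^t(S_g)>0$ over which every fibre $g^{-1}(y)$ has cardinality continuum; pulling back by the open projection $P\colon C_n(X)\to C_t(X)$ makes this typical in $C_n(X)$. Second, a theorem of Hurewicz gives that, since $\dim_{\mathrm{T}}X<n$, typically every fibre of $f\in C_n(X)$ itself has cardinality at most $n$. Together these force $f(X)\cap(W+y)$ to have cardinality continuum for a set of $y$ of positive $\mathcal{H}^t$ measure, where $W$ is the span of the last $n-t$ coordinates. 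Finally, the Marstrand--Mattila intersection theorem says that if $\mathcal{H}^t\vert_{f(X)}$ were $\sigma$-finite then $f(X)\cap(W+y)$ would be at most countable for $\mathcal{H}^t$-almost every $y$, a contradiction. Note also that residuality does not require exhibiting the set as a dense $G_\delta$ directly; it suffices that it contain an intersection of residual sets supplied by cited theorems, which is how the paper sidesteps the semicontinuity obstruction you correctly identify. To complete your approach you would need something playing the role of this slicing argument; the essential-map machinery alone does not detect $\sigma$-finiteness.
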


\begin{proof}
Write $t = \dim_\text{T} X$ and let $P \colon C_n(X) \to C_t(X)$ be defined by
\[
P(f)(x) = (f_1(x), \dots, f_t(x))
\]
for $f \in C_n(X)$ given by $f(x) = (f_1(x), \dots, f_n(x))$.  Then $P$ is clearly continuous and open.  Hence it follows from Lemma \ref{extendingtypical} that if a set $\mathcal{R} \subseteq C_t(X)$ is residual, then $P^{-1}(\mathcal{R})$ is a residual subset of $C_n(X)$.
Kato \cite[Theorem 4.6]{kato} proved that if $X$ is a compact metric space with $\dim_\text{T} X = n$, then for a residual set of functions $f \in C_n(X)$, we have
\begin{itemize}
\item[(1)] $\dim_\text{T} f(X)=n$;
\item[(2)] There exists $\mathcal{F}_\sigma$ sets $E_f, S_f \subseteq f(X)$ such that $E_f \cup  S_f = f(X)$, $\dim_\text{T} E_f \leq n-1$ and for all $y\in S_f$ we have that $f^{-1}(y)$ has cardinality continuum.
\end{itemize}
Let $f$ be in this residual set.  Since $\dim_\text{T} f(X)=n$, $E_f$ and $S_f$ are $\mathcal{F}_\sigma$ sets and $\dim_\text{T} E_f \leq n-1$, it follows that $\dim_\text{T} S_f = n$ since the topological dimension of a countable union of closed sets is the supremum of the individual topological dimensions, see \cite[Theorem 1.5.3.]{engel}, and it follows from this that $\mathcal{H}^n(S_f)>0$ by \cite[Theorem VII 3.]{hurwal}.
\\ \\
By the above argument, we can deduce that there exists a residual set $\mathcal{R}_4 \subseteq C_t(X)$ such that for all $f \in \mathcal{R}_4$, there exists a set $S_f \subseteq f(X)$ such that $\mathcal{H}^t(S_f) >0$ and for all $y \in S_f$ we have that $f^{-1}(y)$ has cardinality continuum.  It follows from a result of Hurewicz that, since $t = \dim_\text{T} X < n$, the set
\[
\mathcal{R}_5:=\left\{f\in C_n(X): \forall y\in \mathbb{R}^n,~\#f^{-1}(y)\leq n \right\}
\]
is residual, see \cite[p. 124.]{kuratowski}, and therefore the set
\[
\mathcal{R}_6 \ = \ P^{-1}(\mathcal{R}_4) \ \cap \  \mathcal{R}_5
\]
is a residual subset of $C_n(X)$.  We will now show that $\mathcal{R}_6 \subseteq \{ f \in C_n(X)  :   \mathcal{H}^{\dim_\text{T} X}\vert_{f(X)} \text{ is not $\sigma$-finite} \}$, proving the lemma.  Assume to the contrary and choose $f \in \mathcal{R}_6$ such that $\mathcal{H}^{\dim_\text{T} X}\vert_{f(X)}$ is $\sigma$-finite. Let $W = \{y \in \mathbb{R}^n : y_1 = \cdots = y_t = 0\}$ and identify $W^\perp$ with $\mathbb{R}^t$.  Since $f \in P^{-1}(\mathcal{R}_4)$ it follows that there exists a set $S_{P(f)} \subseteq P(f)(X) \subseteq \mathbb{R}^t$ of positive $\mathcal{H}^{t}$ measure such that for all $y \in S_{P(f)}$, we have that $f^{-1}(W+y)$  has cardinality continuum, but since $f \in \mathcal{R}_5$ it also follows that for all $y \in \mathbb{R}^n$, we have $\#f^{-1}(y)\leq n$ and these two facts together imply that $f(X) \cap (W+y)$  has cardinality continuum for all $y \in S_{P(f)}$.  However,  the classical intersection theorems of Marstrand and Mattila, see \cite[Theorem 10.10.]{mattila}, imply that for $\mathcal{H}^{t}$ almost every $y \in \mathbb{R}^{t}$ the intersection $f(X) \cap (W+y)$ is at most countable, which yields a contradiction.
\end{proof}

\begin{lma} \label{measurelem3}
If $\dim_\text{\emph{T}} X < n$, then the set
\[
\left\{ f \in C_n(X)  :  \mathcal{P}^{n}( f(X)) >0 \right\}
\]
is meagre.
\end{lma}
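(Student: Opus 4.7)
The plan is to show that the complementary set $\{f \in C_n(X) : \mathcal{P}^n(f(X)) = 0\}$ is residual by writing the stated set as a countable union of closed nowhere dense pieces. The crucial initial reduction is to replace $\mathcal{P}^n$ by Lebesgue measure: since $f(X) \subseteq \mathbb{R}^n$ is compact, Theorem 6.12 of Mattila (already invoked in Lemma \ref{measurelem1}) gives $\mathcal{P}^n(f(X)) = \mathcal{H}^n(f(X))$, and the latter is a constant multiple of $\mathcal{L}^n(f(X))$, where $\mathcal{L}^n$ denotes $n$-dimensional Lebesgue measure. Thus the set in question equals $\bigcup_{k \geq 1} B_k$, where
\[
B_k := \{f \in C_n(X) : \mathcal{L}^n(f(X)) \geq 1/k\}.
\]

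First I would verify closedness of each $B_k$. The image map $\Lambda \colon C_n(X) \to (\mathcal{K}(\mathbb{R}^n), d_\mathcal{H})$, $f \mapsto f(X)$, is continuous, so it suffices to show that $\{K \in \mathcal{K}(\mathbb{R}^n) : \mathcal{L}^n(K) \geq 1/k\}$ is closed in the Hausdorff metric, i.e.\ that $\mathcal{L}^n$ is upper semicontinuous on compacta. If $K_i \to K$ in $d_\mathcal{H}$, then for every $\varepsilon>0$ we eventually have $K_i \subseteq (K)_\varepsilon$, so $\mathcal{L}^n(K_i) \leq \mathcal{L}^n((K)_\varepsilon)$; since $\bigcap_{\varepsilon>0}(K)_\varepsilon = K$ and $\mathcal{L}^n((K)_\varepsilon) < \infty$ for small $\varepsilon$, continuity from above gives $\mathcal{L}^n((K)_\varepsilon) \to \mathcal{L}^n(K)$, whence $\limsup_i \mathcal{L}^n(K_i) \leq \mathcal{L}^n(K)$, as required.

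Second, the emptiness of the interior of $B_k$ follows at once from Lemma \ref{dense2}. That lemma provides a dense set $\mathcal{D}_2 = \{f \in C_n(X) : \overline{\dim}_\text{B} f(X) \leq \dim_\text{T} X\}$, and the hypothesis $\dim_\text{T} X < n$ gives every $f \in \mathcal{D}_2$ the estimate $\dim_\text{H} f(X) \leq \overline{\dim}_\text{B} f(X) < n$, hence $\mathcal{H}^n(f(X)) = 0$ and $\mathcal{L}^n(f(X)) = 0$. Therefore $\mathcal{D}_2 \cap B_k = \emptyset$, so $B_k$ is nowhere dense, completing the proof.

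The main obstacle one might anticipate is the inherent complexity of packing measure, whose two-step definition (a packing pre-measure followed by a countable infimum) makes its behaviour under the continuous map $f \mapsto f(X)$ potentially awkward; in particular, $\mathcal{P}^n$ is not obviously upper semicontinuous as a function of a compact set in the Hausdorff metric. Reducing to $\mathcal{L}^n$ at the outset, via the chain $\mathcal{P}^n = \mathcal{H}^n = c_n \cdot \mathcal{L}^n$ on Borel subsets of $\mathbb{R}^n$, neatly sidesteps this issue and makes the rest of the argument routine.
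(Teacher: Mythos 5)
Your proof is correct, and it reaches the conclusion by a genuinely different mechanism than the paper, even though both arguments pivot on the same two facts: that $\mathcal{P}^n$ is a constant multiple of $\mathcal{L}^n$ on Borel subsets of $\mathbb{R}^n$, and that the dense set $\mathcal{D}_2$ of Lemma \ref{dense2} consists of functions whose images have upper box dimension at most $\dim_\text{T} X < n$ and hence Lebesgue-null images. The paper decomposes the set into the pieces $\mathcal{N}(m) = \{f : \mathcal{P}^n(f(X)) > 1/m\}$ and shows each is nowhere dense by a quantitative argument: given any ball $B(f,r)$, it picks $g_1 \in B(f,r/2) \cap \mathcal{D}_2$ and uses the Minkowski-content estimate $\mathcal{L}^n\left((g_1(X))_\rho\right) \leq C_\varepsilon \rho^{n-t-\varepsilon}$ from \cite[Proposition 3.2]{falconer} to produce an explicit sub-ball $B(g_1,\rho)$ that misses $\mathcal{N}(m)$, since $g_2 \in B(g_1,\rho)$ forces $g_2(X) \subseteq (g_1(X))_\rho$. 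You instead observe that your pieces $B_k = \{f : \mathcal{L}^n(f(X)) \geq 1/k\}$ are \emph{closed}, because $\mathcal{L}^n$ is upper semicontinuous on $(\mathcal{K}(\mathbb{R}^n), d_\mathcal{H})$ and the image map $\Lambda$ is continuous; nowhere density then follows for free from the disjointness of $B_k$ and the dense set $\mathcal{D}_2$. Your soft semicontinuity argument (continuity from above along the shrinking neighbourhoods $(K)_\varepsilon$) entirely replaces the quantitative neighbourhood-volume estimate, needing only that $\mathcal{L}^n(g(X)) = 0$ for $g \in \mathcal{D}_2$ rather than a rate; it also fits naturally alongside the semicontinuity devices of Lemma \ref{semicontinuity} used elsewhere in the paper, and yields the marginally stronger conclusion that the exceptional set is a countable union of closed nowhere dense sets, i.e.\ $\mathcal{F}_\sigma$ and meagre. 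The paper's quantitative route, on the other hand, makes explicit how fast the packing measure of nearby images decays, which is occasionally useful but not needed here.
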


\begin{proof}
We have
\[
\left\{ f \in C_n(X)  : \mathcal{P}^{n}( f(X)) >0 \right\} =  \bigcup_{m =1}^{\infty} \left\{ f \in C_n(X)  :  \mathcal{P}^{n}( f(X)) >1/m \right\}
\]
so it suffices to show that for each $m \in \mathbb{N}^+$, the set $\mathcal{N}(m) := \left\{ f \in C_n(X)  :  \mathcal{P}^{n}( f(X)) >1/m \right\}$ is nowhere dense.  Fix $f \in C_n(X)$, $r>0$ and write $t = \dim_\text{T} X$.  Since $\mathcal{D}_2$ is dense, see Lemma \ref{dense2}, we may find $g_1 \in B(f,r/2) \cap \mathcal{D}_2$ such that
\[
\overline{\dim}_\text{B} \, g_1(X) \leq \dim_\text{T} X = t.
\]
It follows from \cite[Proposition 3.2]{falconer} that for all $\varepsilon \in (0, n-t)$, there exists a constant $C_\varepsilon>0$ such that
\[
\mathcal{L}^n \left( (g_1(X))_\rho \right) \leq   C_\varepsilon \, \rho^{n-t-\varepsilon}
\]
for all $\rho \in (0,1)$, where $\mathcal{L}^n$ denotes the $n$-dimensional Lebesgue measure and $(g_1(X))_\rho$ denotes the $\rho$-neighbourhood of $g_1(X)$.  Let $\rho<r/2$ and observe that if $g_2 \in B(g_1, \rho) \subseteq B(f,r)$, then $g_2(X) \subseteq (g_1(X))_\rho$.  Since there exists a constant $C(n)$ such that $\mathcal{P}^n(E) = C(n) \, \mathcal{L}^n(E)$ for Borel sets $E \subseteq \mathbb{R}^n$ (see \cite[Proposition 1.4.5]{cohn}), for $g_2 \in B(g_1, \rho) \subseteq B(f,r)$ we have
\[
\mathcal{P}^{n}\left( g_2(X) \right) \ \leq \  \mathcal{P}^{n} \left( (g_1(X))_\rho \right) \ =\ C(n) \,  \mathcal{L}^n  \left( (g_1(X))_\rho \right) \ \leq \  C(n) \, C_\varepsilon \, \rho^{n-t-\varepsilon} \  < \  1/m
\]
for sufficiently small $\rho$, which completes the proof.
\end{proof}

\vspace{6mm}

\begin{centering}

\textbf{Acknowledgements}

\end{centering}

RB is supported by the Hungarian Scientific Foundation grant no.~72655.  AF, JMF and JTH were all supported individually by EPSRC Doctoral Training Grants.  The authors thank Kenneth Falconer for several helpful comments on the exposition of the article and M\'arton Elekes for drawing their attention to the paper \cite{doug}.  Finally, the authors thank an anonymous referee for several helpful suggestions.


\begin{thebibliography}{99}

\bibitem[A]{A} P. Alexandroff, Dimensionstheorie. Ein Beitrag zur Geometrie der abgeschlossenen Mengen, \emph{Math. Ann.}, \textbf{106}, (1932), 161--238.

\bibitem[BH]{BH}
F. Bayart and Y. Heurteaux.
On the Hausdorff dimension of graphs of prevalent continuous functions on compact sets, \emph{Further Developments in Fractals and Related Fields}, Birkh\"auser, Boston, 2013.

\bibitem[BBT]{realanalysis}
A. M. Bruckner, J. B. Bruckner, B. S. Thomson.
{\em Real Analysis},
Prentice Hall (Pearson), 1997.

\bibitem[C]{char}
M. G. Charalambous.
A note on the relations between fractal and topological dimensions,
\emph{Questions Answers Gen. Topology}, {\bf 17}, (1999), 9--16.

\bibitem[Ch1]{christ}
J. P. R. Christensen.
On sets of Haar measure zero in abelian Polish groups, \emph{Israel J. Math.}, \textbf{13}, (1972), 255--260.

\bibitem[Ch2]{christ2}
J. P. R. Christensen.
\emph{Topology and Borel structure}, North-Holland, Amsterdam, 1974.

\bibitem[Ci]{cies}
K. Ciesielski.
\emph{Set Theory for the Working Mathematician}, Cambridge University Press, 1997.

\bibitem[Co]{cohn}
D. L. Cohn.
{\em Measure theory},
Birkh\"auser, Boston, Mass., 1980.

\bibitem[D]{doug}
R. Dougherty.
Examples of nonshy sets, \emph{Fund. Math.}, \textbf{144}, (1994), 73--88.

\bibitem[E]{engel}
R. Engelking.
{\em Dimension Theory},
North-Holland Publishing Company, 1978.

\bibitem[F]{falconer}
K.~J. Falconer.
 {\em Fractal Geometry: Mathematical Foundations and Applications},
John Wiley, 2nd Ed., 2003.

\bibitem[FFr]{me_horizon}
K.~J. Falconer and J.~M. Fraser.
The horizon problem for prevalent surfaces,
{\em Math. Proc. Cambridge Philos. Soc.}, {\bf 151}, (2011), 355--372.

\bibitem[Fr]{me_typrandom}
J.~M. Fraser.
Dimension and measure for typical random fractals,
{\em Ergodic Th. Dynam. Syst.}, {\bf 35}, (2015), 854--882.

\bibitem[FrH]{me_prevalence}
J.~M. Fraser and J. T. Hyde.
The Hausdorff dimension of graphs of prevalent continuous functions,
{\em Real Anal. Exchange.}, 37, (2011/2012), 333--352.

\bibitem[GJMNOP]{lowerprevalent}
V. Gruslys, J. Jonu\v{s}as, V. Mijovi\`c, O. Ng, L. Olsen and I. Petrykiewicz.  Dimensions of prevalent continuous functions,  \emph{Monatsh. Math.}, {\bf 166}, (2012), 153--180.

\bibitem[H]{products}
J. D. Howroyd.
On Hausdorff and packing dimension of product spaces,
\emph{Math. Proc. Cambridge Philos. Soc.}, {\bf 119}, (1996), 715--727.

\bibitem[HP]{humkepacking}
 P.~D. Humke and G. Petruska.  The packing dimension of a typical continuous function is 2, {\em  Real Anal. Exchange}, {\bf 14}, (1988--89), 345--358.

\bibitem[HSY]{prevalence1}
B.~R. Hunt, T. Sauer and J.~A. Yorke.  Prevalence:  a translational-invariant ``almost every'' on infinite dimensional spaces, {\em Bull. Amer. Math. Soc. (N.S.)}, {\bf 27}, (1992), 217--238.

\bibitem[HW]{hurwal}
W. Hurewicz and H. Wallman.
{\em Dimension Theory},
Princeton Mathematical
Series 4, Princeton University Press, 1941.

\bibitem[HLOPS]{bairefunctions}
J.~T. Hyde, V. Laschos, L. Olsen, I. Petrykiewicz and A. Shaw. On the box dimensions of graphs of typical functions, \emph{J. Math. Anal. Appl.}, {\bf 391}, (2012), 567--581

\bibitem[K]{kato}
H. Kato.
Higher-dimensional Bruckner-Garg type theorem, \textit{Topology Appl.}, \textbf{154}, (2007), 1690--1702.

\bibitem[Ka]{kaufman}
R. Kaufman. On the Hausdorff dimension of projections, {\em Mathematika}, {\bf 15}, (1968), 153--155.

\bibitem[Ke]{kechris}
A. S. Kechris.
\emph{Classical Descriptive Set Theory}, Springer-Verlag, 1995.


\bibitem[Ku]{kuratowski}
K. Kuratowski.
\textit{Topology II}, Academic Press, 1968.

\bibitem[L]{luk}
J. Luukkainen.
Assouad dimension: antifractal metrization, porous sets, and homogeneous measures,
\emph{J. Korean Math. Soc.}, {\bf 35}, (1998), 23--76.

\bibitem[M]{marstrand}
J. M. Marstrand. Some fundamental geometrical properties of plane sets of fractional dimensions, \emph{Proc. London Math. Soc.}, {\bf 4}, (1954), 257--302.

\bibitem[Ma1]{mattilaproj}
P. Mattila.
Hausdorff dimension, orthogonal projections and intersections with planes,
\emph{Ann. Acad. Sci. Fennicae}, {\bf A 1}, (1975), 227--244.

\bibitem[Ma2]{mattila}
P. Mattila.
\textit{Geometry of sets and measures in Euclidean spaces}, Cambridge Studies in Advanced Mathematics No.~44, Cambridge University Press, 1995.

\bibitem[MaM]{mattilamauldin}
P. Mattila and R.~D. Mauldin. Measure and dimension functions: measurability and densities, {\em Math. Proc. Camb. Phil. Soc.}, {\bf 121}, (1997), 81--100.

\bibitem[MW]{graphsums}
R. D. Mauldin and S. C. Williams. On the Hausdorff dimension of some graphs, {\em Trans. Amer. Math. Soc.}, {\bf 298}, (1986), 793--803.

\bibitem[Mc]{mcclure}
M. McClure. The prevalent dimension of graphs, {\em Real Anal. Exchange}, {\bf 23} (1997), 241--246.

\bibitem[Mi]{mirzaie}
R. Mirzaie. On images of continuous functions from a compact manifold to Euclidean space, \emph{Bull. Iranian Math. Soc.}, {\bf 37}, (2011), 93--100.

\bibitem[O]{multiszpil}
L. Olsen. Topological dimensions and multifractal R\'enyi dimensions of Polish spaces: a multifractal Szpilrajn type theorem, \emph{Monatsh. Math.}, {\bf 155}, (2008), 191--203.

\bibitem[Or]{orpproj}
T. Orponen.  On the packing dimension and category of exceptional sets of orthogonal projections, \emph{preprint}, (2012).

\bibitem[OY]{prevalence}
W. Ott and J. A. Yorke.  Prevalence, {\em Bull. Amer. Mat. Soc.}, {\bf 42}, (2005), 263--290.

\bibitem[Ox]{oxtoby}
J. C. Oxtoby. \emph{Measure and Category},  Springer, 2nd Ed., 1996.

\bibitem[R]{rudin1}
W. Rudin. {\em Principles of Mathematical Analysis},
McGraw-Hill, 2nd Ed., 1964.

\bibitem[S]{salat}
T. \v Sal\'at.
A remark on normal numbers,
\emph{Rev. Roumaine Math. Pures Appl.}, {\bf 11}, (1966), 53--56.

\bibitem[SY]{prevalentimages}
T. Sauer and J.~A. Yorke.
Are the dimensions of a set and its image equal under typical smooth functions?
\emph{Ergodic Theory Dynam. Systems},  {\bf 17}, (1997), 941--956.


\bibitem[Sh]{shaw}
A. Shaw. Prevalence, {\em M.Math Dissertation}, University of St Andrews, (2010).

\bibitem[Sz]{szpil}
E. Szpilrajn.
La dimension et la mesure,
\emph{Fund. Math.},  {\bf 28}, (1937), 81--89.

\end{thebibliography}
\end{document}